\tikzstyle{vertex}=[circle,draw=black,fill=black,inner sep=0,minimum size=3pt,text=white,font=\footnotesize]
\newtheorem{thm}{Theorem}[section]
\newtheorem{lemma}[thm]{Lemma}
\newtheorem{proposition}[thm]{Proposition}
\newtheorem{cor}[thm]{Corollary}
\newtheorem{clm}[thm]{Claim}
\newtheorem*{lemma*}{Lemma}
\newtheorem*{proposition*}{Proposition}
\newtheorem*{theorem*}{Theorem}
\newcommand\ex{\ensuremath{\mathrm{ex}}}
\newcommand\cG{{\mathcal G}}
\newcommand\cN{{\mathcal N}}
\newcommand{\ignore}[1]{}
\title{On Tur\'an-good graphs}
\author{D\'aniel Gerbner\footnote{Alfr\'ed R\'enyi Institute of Mathematics, E-mail: \texttt{gerbner.daniel@renyi.hu.} Research supported by the
    National Research, Development and Innovation Office -- NKFIH under the
    grants FK 132060, KKP-133819, KH130371 and SNN 129364.}}
    \date{}
\begin{document}

\maketitle

\begin{abstract}
     For graphs $H$ and $F$, the generalized Tur\'an number $\ex(n,H,F)$ is the largest number of copies of $H$ in an $F$-free graph on $n$ vertices. We say that $H$ is $F$-Tur\'an-good if $\ex(n,H,F)$ is the number of copies in the $(\chi(F)-1)$-partite Tur\'an graph, provided $n$ is large enough.

We present a general theorem in case $F$ has an edge whose deletion decreases the chromatic number. In particular, this determines $\ex(n,P_k,C_{2\ell+1})$ and $\ex(n,C_{2k},C_{2\ell+1})$ exactly, if $n$ is large enough. We also study the case when $F$ has a vertex whose deletion decreases the chromatic number.
\end{abstract}

\section{Introduction}

A basic result in extremal Combinatorics is Tur\'an's theoem \cite{T}. It states that a $K_k$-free graph on $n$ vertices cannot have more edges than the Tur\'an graph $T_{k-1}(n)$, which is the complete $(k-1)$-partite graph where each partite class has cardinality $\lfloor n/(k-1)\rfloor$ or $\lceil n/(k-1)\rceil$. In general, Tur\'an theory deals with the function $\ex(n,F)$, which is the largest number of edges in $n$-vertex $F$-free graphs. Let $\cN(H,G)$ denote the number of copies of $H$ in $G$. Generalized Tur\'an theory deals with $\ex(n,H,F):=\max\{\cN(H,G): G \text{ is an $n$-vertex $F$-free graph}\}$, i.e. the largest number of copies of $H$ in $F$-free graphs on $n$ vertices. 

After several sporadic results (see e.g. \cite{BGY2008,G2012,gypl,HaETAL,LiGy,zykov}), the systematic study of this problem was initiated by Alon and Shikhelman \cite{as}. 
Since then, this problem has attracted several researchers, see e.g. \cite{cc,chase,gerbner2,GGMV2017+,gp2,gp3,gs2017,gstz,mq}. 

However, there are not many exact results in this area (by exact result we mean that for given $H$ and $F$, we know the value of $\ex(n,H,F)$ for every $n$ large enough). 
Most of the exact results are when the Tur\'an graph contains the most copies of $H$. Gy\H ori, Pach and Simonovits \cite{gypl} examined for what graphs $H$ do we have that $\ex(n,H,K_{k+1})=\cN(H,T_k(n))$. Gerbner and Palmer \cite{gp3} extended these investigations for arbitrary $k$-chromatic graphs.
Following them, given a graph $F$ with $\chi(F)=k$, we say that $H$ is \textit{$F$-Tur\'an-good} if $\ex(n,H,F)=\cN(H,T_{k-1}(n))$ and $H$ does not contain $F$. If $F=K_k$, we use the briefer term \textit{$k$-Tur\'an good}. Let us state the main result of Gy\H ori, Pach and Simonovits \cite{gypl} using this term.

\begin{thm}[Gy\H ori, Pach and Simonovits \cite{gypl}]\label{gpl} 
Let $r\ge 3$ and let $H$ be a $(k-1)$-partite graph with $m>k-1$ vertices,
containing $\lfloor m/(k-1)\rfloor$ vertex disjoint copies of $K_{k-1}$. Suppose
further that for any two vertices $u$ and $v$ in the same connected component of $H$, there is a sequence $A_1,\dots,A_s$ of $(k-1)$-cliques in $H$ such that $u\in A_1$, $v\in A_s$, and for any $i<s$, $A_i$ and $A_{i+1}$ share $k-2$ vertices.
Then $H$ is $k$-Tur\'an-good. Moreover, if $n$ is large enough, the Tur\'an graph is the only $K_k$-free graph with $\ex(n,H,K_k)$ copies of $H$.
\end{thm}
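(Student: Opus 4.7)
The plan is to combine an explicit description of copies of $H$ in the Tur\'an graph with an Erd\H os--Simonovits style stability step and a vertex cleaning argument. Set $r:=\lfloor m/(k-1)\rfloor$. First I would analyze embeddings of $H$ into complete $(k-1)$-partite graphs. Because $H$ contains $r$ pairwise disjoint copies of $K_{k-1}$, which together consume $r(k-1)\ge m-(k-2)$ of the $m$ vertices, and each $K_{k-1}$ must receive all $k-1$ colors in any proper $(k-1)$-coloring, the color classes of $H$ are almost balanced. Moreover, the chain hypothesis propagates the coloring of one $(k-1)$-clique through an entire connected component, so the proper $(k-1)$-coloring of each component of $H$ is unique up to a global color permutation. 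Consequently $\cN(H,T_{k-1}(n))$ expands as a sum of finitely many products of falling factorials in the part sizes, and this expression is symmetric and strictly Schur-concave, so it is strictly maximized over all complete $(k-1)$-partite graphs on $n$ vertices at the balanced one.

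Next, let $G$ be a $K_k$-free graph on $n$ vertices with $\cN(H,G)\ge\cN(H,T_{k-1}(n))$. Since $K_{k-1}\subseteq H$, we have the crude bound $\cN(H,G)\le\cN(K_{k-1},G)\cdot n^{m-k+1}$, and Zykov's theorem gives $\cN(K_{k-1},G)\le\cN(K_{k-1},T_{k-1}(n))$. Combined with the stability form of Zykov's inequality, this forces $G$ to admit a partition $V_1,\dots,V_{k-1}$ with $\big||V_i|-n/(k-1)\big|\le\varepsilon n$ in which only $\varepsilon n^2$ edges are \emph{bad}, i.e.\ either lie inside some $V_i$ or are missing from between some $V_i,V_j$.

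Finally I would carry out a vertex-by-vertex cleaning. For each $v\in V_i$, compare the number of copies of $H$ through $v$ in $G$ with the corresponding number when the neighborhood of $v$ is replaced by $V(G)\setminus V_i$. By the rigidity from the first paragraph, the copies of $H$ through $v$ split into finitely many coloring patterns and are counted by products of local $H$-fragment counts over the parts. Schur-concavity together with the near-balance of the partition shows that correcting any bad edge strictly increases at least one factor without compensating loss, so any deviation from $T_{k-1}(n)$ strictly decreases $\cN(H,G)$. The main obstacle is maintaining this strict-inequality statement in the presence of the residual $\varepsilon n^2$ bad edges produced by the stability step; both hypotheses on $H$ are used crucially here, the $K_{k-1}$-packing to ensure that $v$ participates in many copies of $H$, and the clique-chain condition to rule out off-pattern copies that could hide cancellations.
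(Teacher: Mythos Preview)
This theorem is not proved in the paper at all: it is Theorem~\ref{gpl}, stated with attribution to Gy\H ori, Pach and Simonovits~\cite{gypl} and used as a black box (for instance in the proofs of Proposition~\ref{newturgoo} and Claim~\ref{clam}). There is therefore no proof in the paper to compare your proposal against.

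Since a comparison is impossible, let me comment briefly on the proposal itself. Your first paragraph is the solid part: the clique-chain hypothesis does force the proper $(k-1)$-coloring of each component to be unique up to a permutation of colors, and this is exactly what drives the count in the Tur\'an graph. The stability step, however, is where the argument is thin. The crude inequality $\cN(H,G)\le \cN(K_{k-1},G)\cdot n^{m-k+1}$ is far too lossy: from $\cN(H,G)\ge \cN(H,T_{k-1}(n))$ it only yields $\cN(K_{k-1},G)\ge c\,n^{k-1}$ for some small $c>0$, which is not nearly enough to invoke a stability form of Zykov's theorem and conclude that $G$ is $\varepsilon$-close to $T_{k-1}(n)$. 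You would need a much tighter counting lemma relating $\cN(H,G)$ to $\cN(K_{k-1},G)$ (or to go through the Removal Lemma / regularity route) before any structural conclusion about $G$ follows. The cleaning paragraph inherits this gap: without first knowing that the partition is genuinely close to balanced with few bad edges, the ``Schur-concavity plus local correction'' heuristic does not get off the ground, and the phrase ``without compensating loss'' hides precisely the difficulty. The original proof in~\cite{gypl} handles these issues with a more direct clique-counting comparison rather than a stability-plus-cleaning scheme.
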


Gerbner and Palmer \cite{gp3} obtained a theorem of a similar flavor.

\begin{thm}[Gerbner, Palmer \cite{gp3}]\label{turgood}
	Let $H$ be a $k$-Tur\'an-good graph. Let $H'$ be any graph constructed from $H$ in the following way.
	Choose a complete subgraph of $H$ with vertex set $X$, add a vertex-disjoint copy of $K_{k-1}$ to $H$ and join the vertices in $X$ to the vertices of $K_{k-1}$ by edges arbitrarily.
	Then $H'$ is $k$-Tur\'an-good.
\end{thm}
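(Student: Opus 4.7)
The plan is to show $\cN(H',G) \le \cN(H',T_{k-1}(n))$ for every $K_k$-free graph $G$ on $n$ vertices, once $n$ is large enough. I would decompose each copy of $H'$ into its underlying copy of $H$ together with the appended copy of $K_{k-1}$, and bound each ingredient.

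The first step is a counting identity. For any graph $\Gamma$, every injective homomorphism $H'\to\Gamma$ restricts to an injective homomorphism $\phi:H\to\Gamma$, so
\[
\mathrm{Inj}(H',\Gamma)\;=\;\sum_{\phi\in\mathrm{Inj}(H,\Gamma)} e_\Gamma(\phi),
\]
where $e_\Gamma(\phi)$ counts ordered $(k-1)$-cliques in $\Gamma\setminus \phi(V(H))$ whose adjacencies to $\phi(X)$ match the prescribed edges between $X$ and the appended $K_{k-1}$. Dividing by $|\Aut(H')|$ gives the copy count $\cN(H',\Gamma)$.

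The second step is to control $e_G(\phi)$. The key structural fact is that $X$ is a clique of size $r\le k-1$, so $\phi(X)$ is an $r$-clique in $G$, and by the $K_k$-freeness of $G$, the common neighborhood of any $s$-subset of $\phi(X)$ is $K_{k-s}$-free. This limits how many ordered $(k-1)$-cliques can attach to $\phi(X)$ with the prescribed adjacency pattern. In $T_{k-1}(n)$ the situation is rigid: $\phi(X)$ occupies $r$ distinct parts, each $(k-1)$-clique uses exactly one vertex per part, and the extension count depends only on the coloring that $\phi$ induces on $V(H)$. I would show, via a Zykov-type symmetrization that reduces to the case of $G$ complete $(k-1)$-partite, that $e_G(\phi)$ is dominated in aggregate by the corresponding quantity in $T_{k-1}(n)$.

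Combining this bound with the hypothesis that $H$ is $k$-Tur\'an-good, i.e.\ $\cN(H,G)\le \cN(H,T_{k-1}(n))$, yields $\cN(H',G)\le \cN(H',T_{k-1}(n))$. The main obstacle is the second step: the edges between $X$ and the new $K_{k-1}$ are arbitrary, so the adjacency specification can be any pattern, and one must verify that the symmetrization never decreases $\cN(H',\cdot)$ whatever the pattern. This delicate interplay between the clique $X$, the clique structure of the appended $K_{k-1}$, and the $K_k$-freeness of $G$ is where the crux of the argument lies.
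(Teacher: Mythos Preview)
Your decomposition of $H'$ into $H$ plus the appended $K_{k-1}$ is exactly the right setup, and it matches how the paper (following \cite{gp3}) organizes the count. The gap is precisely where you say it is: your ``second step'' is the entire content of the proof, and you have not actually carried it out. Proposing a Zykov-type symmetrization here is not convincing---symmetrization moves (replacing one vertex's neighborhood by another's) do not obviously preserve or increase $\cN(H',\cdot)$ for a fixed non-clique $H'$, and you yourself flag this as the crux without resolving it.

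The argument in \cite{gp3}, as summarized in this paper's proof of Theorem~\ref{turgoodcrit}, fills exactly this gap with a clean combinatorial lemma that you are missing. Once you have fixed a copy $K$ of $K_{k-1}$ in $G$ and a clique $Y=\phi(X)$ disjoint from it, look at the bipartite graph of edges between $K$ and $Y$. If no matching covering $Y$ is missing, then by Hall's theorem there is a subset $Y'\subseteq Y$ each of whose vertices is joined to all but fewer than $|Y'|$ vertices of $K$; those vertices together with $Y'$ form a clique of size at least $k$, contradicting $K_k$-freeness. So in any $K_k$-free $G$, a matching covering $Y$ is missing between $K$ and $Y$. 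In $T_{k-1}(n)$, between a $(k-1)$-clique and an $r$-clique \emph{exactly} a matching covering the smaller clique is missing. Hence, for any prescribed adjacency pattern between $X$ and the new $K_{k-1}$, the number of ways to realize it is at least as large in the Tur\'an graph as in $G$. Combined with Zykov's bound on the number of $K_{k-1}$'s and the $k$-Tur\'an-good hypothesis for $H$ (applied on the remaining $n-k+1$ vertices), each factor in the count is maximized by $T_{k-1}(n)$, and in $T_{k-1}(n)$ the maxima are attained simultaneously for every choice, so the product bound is tight there. This Hall argument is the missing ingredient; once you have it, no symmetrization is needed.
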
 

Neither of the above two theorems imply the other. The main difference is that vertices in the additional clique can be connected to anything in Theorem \ref{gpl}, but only to vertices of another clique in Theorem \ref{turgood}. The trade-off is the necessity of the strong connection property of the cliques in Theorem \ref{gpl}. Let us remark that for $k=3$, the assumptions of Theorem \ref{gpl} are nothing else but that $H$ is bipartite and has a matching of size $\lfloor |V(H)|/2\rfloor$; the property of the sequence of 2-cliques reduces to the property that every connected component is connected.

Observe that on their own, both theorems use $K_{k-1}$ as building blocks, and can be used only for graphs mostly covered by vertex-disjoint copies of $K_{k-1}$.
Therefore, another difference is that in Theorem \ref{turgood} we can start with an arbitrary $k$-Tur\'an-good graph, and add cliques afterwards. Here we prove such a strengthening for Theorem \ref{gpl}.

\begin{proposition}\label{newturgoo}
 Let $H$ be a $k$-Tur\'an-good graph with a unique proper $(k-1)$-coloring. Let $H'$ consist of $H$ and a copy $K$ of $K_{k-1}$ with vertices $v_1,\dots,v_{k-1}$, with additional edges between $V(H)$ and $V(K)$ such that for every $i\le k-1$, there is a copy of $K_{k-1}$ in $H'$ containing $v_i$, but not containing any $v_j$ for $j>i$. If $H'$ has chromatic number $k-1$, then $H'$ is $k$-Tur\'an-good.
\end{proposition}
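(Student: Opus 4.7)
The plan is to count embeddings of $H'$ into an arbitrary $K_k$-free host $G$ by conditioning on the image of the newly added clique $K$, and then use the $k$-Tur\'an-goodness of $H$ to bound the residual count.

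\emph{Step 1 (Unique $(k-1)$-coloring of $H'$).} The restriction to $V(H)$ of any proper $(k-1)$-coloring of $H'$ is the unique coloring on $H$. Processing $v_1, \ldots, v_{k-1}$ in order, the clique $Q_i$ guaranteed by the hypothesis has its other $k-2$ vertices in $V(H)\cup\{v_1,\ldots,v_{i-1}\}$; their colors are pairwise distinct and already fixed, so $v_i$ is forced to take the missing color. Hence $H'$ has a unique proper $(k-1)$-coloring, a rigidity that will be used throughout.

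\emph{Step 2 (Counting decomposition).} Let $G$ be $K_k$-free on $n$ vertices. Each embedding $\phi:H'\hookrightarrow G$ is determined by the ordered $K_{k-1}$ $(u_1,\ldots,u_{k-1}):=(\phi(v_1),\ldots,\phi(v_{k-1}))$ together with the restriction $\phi_0:=\phi\rest V(H): H\hookrightarrow G\setminus\{u_1,\ldots,u_{k-1}\}$, which must satisfy $\phi_0(x)\in N_G(u_i)$ for every $x\sim_{H'} v_i$. Writing $\psi(u_1,\ldots,u_{k-1})$ for the number of such $\phi_0$,
\[
|\mathrm{Emb}(H',G)|\;=\;\sum_{(u_1,\ldots,u_{k-1})} \psi(u_1,\ldots,u_{k-1}).
\]

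\emph{Step 3 (Comparison with $T_{k-1}(n)$).} In the Tur\'an graph $T_{k-1}(n)$ with parts $A_1,\ldots,A_{k-1}$, any ordered $K_{k-1}$ picks one vertex per part, inducing a bijection $\sigma:[k-1]\to[k-1]$ with $u_i\in A_{\sigma(i)}$. The unique coloring of $H'$ forces each extension $\phi_0$ to send every $H'$-color class into a specific part, so $\psi(u_1,\ldots,u_{k-1})$ reduces to a product of falling factorials depending only on the part sizes. For a general $K_k$-free $G$, my strategy is to bound $\psi(u_1,\ldots,u_{k-1})$ by the number of $H$-embeddings into a $(k-1)$-partite auxiliary subgraph extracted from the neighborhood structure of the clique (using the crucial observation that $\bigcap_i N_G(u_i)=\emptyset$ by $K_k$-freeness, which forces $I_x\subsetneq[k-1]$ for every $x\in V(H)$). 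The $k$-Tur\'an-goodness of $H$ applied to this auxiliary subgraph, combined with Zykov's bound $\cN(K_{k-1},G)\le\cN(K_{k-1},T_{k-1}(n))$, then yields $\cN(H',G)\le\cN(H',T_{k-1}(n))$.

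\emph{Main obstacle.} The crux is bounding $\psi(u_1,\ldots,u_{k-1})$ uniformly over ordered $K_{k-1}$'s in a general $G$: an atypical host might concentrate its extensions on cliques with unusually rich common neighborhoods. The unique $(k-1)$-coloring of $H'$ is the essential device: it rigidifies the ``type'' of each $H'$-copy relative to its embedded clique, letting the counting reduce term-by-term to the Tur\'an-good bound for $H$ together with Zykov's bound for the base cliques.
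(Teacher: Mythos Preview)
Your decomposition in Step~2 is exactly the one the paper uses, and Zykov's bound on $\cN(K_{k-1},G)$ together with the $k$-Tur\'an-goodness of $H$ on $G\setminus K'$ are indeed the two outer ingredients. The gap is in Step~3: you never say \emph{how} you will bound $\psi(u_1,\dots,u_{k-1})$, and the device you gesture at (an ``auxiliary $(k-1)$-partite subgraph extracted from the neighbourhood structure of the clique'') does not do the job. The constraints $\phi_0(x)\in N_G(u_i)$ are imposed vertex-by-vertex and need not respect the colour classes of $H$, so there is no natural $(k-1)$-partite subgraph to which you can apply the Tur\'an-good bound for $H$; and if you simply drop the adjacency constraints and bound $\psi\le|\mathrm{Emb}(H,G\setminus\{u_1,\dots,u_{k-1}\})|$, summing over \emph{ordered} $(k-1)$-cliques overcounts by a factor of $(k-1)!$ relative to the Tur\'an value, which kills the argument.

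The step you are missing, and which the paper carries out, is the following: fix an \emph{unlabeled} copy $K'$ of $K_{k-1}$ in $G$ and an embedding $\phi_0:H\hookrightarrow G\setminus K'$; then there is \emph{at most one} way to assign $v_1,\dots,v_{k-1}$ to the vertices of $K'$ so that all cross-edges of $H'$ are realised. This is exactly where the hypothesis on the $v_i$'s is used: processing $i=1,\dots,k-1$ in order, the promised $(k-1)$-clique $Q_i\ni v_i$ has its other $k-2$ vertices already placed in $G$; those $k-2$ images form a clique, and since $G$ is $K_k$-free they have at most one common neighbour inside $K'$, so $\phi(v_i)$ is forced. Consequently $\sum_{\text{orderings of }K'}\psi(\cdot)\le |\mathrm{Emb}(H,G\setminus K')|$, and \emph{now} Zykov plus the $k$-Tur\'an-goodness of $H$ give the bound, with equality in $T_{k-1}(n)$ verified via the unique colouring (your Step~1). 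Your Step~1 and your observation $\bigcap_i N_G(u_i)=\emptyset$ are both correct but neither substitutes for this rigidity argument on the labelling of $K'$.
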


Let us show an example where this proposition is stronger then the above theorems. We will start with a slightly unbalanced complete bipartite graph. Ma and Qiu \cite{mq} showed that $K_{s,t}$ with $s\le t$ is 3-Tur\'an-good if and only if $t<s+1/2+\sqrt{2s+1/4}$. Proposition \ref{newturgoo} implies that if the vertices of a connected bipartite graph $H$ can be vertex-disjointly covered by one such $K_{s,t}$ and a matching, then $H$ is 3-Tur\'an-good.

Let us turn our attention to $F$-Tur\'an-good graphs where $F$ is not a clique. We show a weak version of the above results for this case. 
We say that an edge of a graph $G$ is a color-critical edge if deleting it from $G$ decreases its chromatic number. An $m$-chromatic graph $F$ with a color-critical edge often behaves similarly to $K_m$ in extremal problems. In particular, Simonovits \cite{miki} showed that for $n$ large enough, the Tur\'an graph $T_{m-1}(n)$ contains the most number of edges, and it was extended by Ma and Qiu \cite{mq}, who showed that $T_{m-1}(n)$ also contains the most number of $K_r$ for $r<m$. Gerbner \cite{gerbner3} proved a stability version.

\begin{lemma}[Gerbner \cite{gerbner3}]\label{stabi} Let $F$ be a $k$-chromatic graph with a color-critical edge and $r<k$. If $G$ is an $n$-vertex $F$-free graph with chromatic number more than $k-1$, then $ex(n,K_r,F)-N(G,K_r)=\Omega(n^{r-1})$.
\end{lemma}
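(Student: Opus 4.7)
My plan is to argue the contrapositive: I assume $G$ is an $n$-vertex $F$-free graph with $\cN(K_r,G) > \ex(n,K_r,F) - cn^{r-1}$ for a sufficiently small constant $c>0$ and deduce $\chi(G)\le k-1$, contradicting the hypothesis. The backbone is the Ma--Qiu theorem, giving $\ex(n,K_r,F)=\cN(K_r,T_{k-1}(n))$ for large $n$.

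First, I would invoke a stability companion to Ma--Qiu (obtainable by the standard Simonovits-style progressive induction): any $F$-free graph with $\cN(K_r,G) \ge \cN(K_r,T_{k-1}(n)) - o(n^r)$ can be made $(k-1)$-partite by removing $o(n^2)$ edges. Fix a $(k-1)$-partition $V_1,\dots,V_{k-1}$ maximising the crossing edges, and suppose for contradiction that some part, say $V_1$, contains an internal edge $uv$. Combining stability with a max-cut exchange argument, both $u$ and $v$ have across-degree $(1-o(1))\,|V_j|$ into each $V_j$ with $j\ge 2$; a supersaturation count then produces $\Omega(n^{k-2})$ copies of $K_{k-2}$ spread across $V_2,\dots,V_{k-1}$ that are fully joined to both $u$ and $v$.

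Choosing a color-critical edge $e$ of $F$ together with a proper $(k-1)$-coloring of $F-e$ in which $e$ spans the first color class, I would greedily embed $F-e$ into this near-blow-up structure with $e$ mapped to $uv$, producing a copy of $F$ in $G$ and contradicting $F$-freeness. Hence no part can contain an internal edge, so $G$ is $(k-1)$-partite, as required. The quantitative $\Omega(n^{r-1})$ deficit then follows from a finer accounting: each ``defect'' (either an endpoint of an internal edge, or a vertex with significantly reduced crossing degree) destroys $\Omega(n^{r-2})$ would-be copies of $K_r$, and non-$(k-1)$-colorability forces a linear-in-$n$ number of such defects to persist beyond what the $F$-embedding argument allows.

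The main obstacle is this last quantitative step. Bare stability only yields an $o(n^r)$ deficit, so the color-critical edge must be used not merely qualitatively (to rule out $G=T_{k-1}(n)$ plus one extra edge) but quantitatively: one must argue that every internal edge, or every missing crossing edge to a defect vertex, forbids a batch of $\Omega(n^{r-2})$ local $K_r$-completions without creating a copy of $F$, and that the number of such defects is $\Omega(n)$. Making this amplification tight requires carefully selecting the color class in the $(k-1)$-coloring of $F-e$ that can absorb the structural defect and tracking the supersaturation constants through the greedy embedding.
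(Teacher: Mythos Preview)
The paper does not prove this lemma; it is quoted from \cite{gerbner3} and used as a black box. So there is no ``paper's own proof'' to compare against, and I will just assess your sketch on its merits.

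Your overall architecture is the right one: assume the $K_r$-deficit is below $cn^{r-1}$, apply a $K_r$-counting stability theorem to get an almost-$(k-1)$-partite structure, and then use the colour-critical edge of $F$ to embed $F$ on top of any surviving internal edge. Two points, however, are not yet justified.

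First, the sentence ``combining stability with a max-cut exchange argument, both $u$ and $v$ have across-degree $(1-o(1))|V_j|$'' is a genuine gap. Max-cut only yields $\deg_{V_j}(u)\ge \deg_{V_1}(u)$, which could be tiny; and stability only says there are $o(n^2)$ internal edges globally, which says nothing about the particular vertices $u,v$. What actually forces every vertex to have near-full crossing degree is the hypothesis that the deficit is below $cn^{r-1}$: a vertex $w$ that is in fewer than $(C-c')n^{r-1}$ copies of $K_r$ (where $C n^{r-1}$ is the per-vertex count in $T_{k-1}(n)$) can be deleted, and then $G-w$ would beat $\ex(n-1,K_r,F)$, contradicting Ma--Qiu on $n-1$ vertices. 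This is the step that upgrades ``$o(n^2)$ bad edges globally'' to ``every vertex is good'', and it must be stated and used explicitly before the embedding argument. Once every vertex has near-full crossing degree, one still needs a supersaturation/common-neighbourhood count to get the $\Omega(n^{k-2})$ transversal $K_{k-2}$'s joined to both $u$ and $v$; that step is routine but should be flagged.

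Second, your final paragraph misdiagnoses the quantitative mechanism. You write that ``non-$(k-1)$-colourability forces a linear-in-$n$ number of such defects''; this is false, since a single internal edge already makes $G$ non-$(k-1)$-colourable. The correct bookkeeping goes the other way: under deficit $<cn^{r-1}$, there are only $O_c(1)$ ``bad'' vertices (each bad vertex costs $\Omega(n^{r-1})$, not $\Omega(n^{r-2})$, by the deletion argument above), and after setting those aside the remaining vertices all have near-full crossing degree, so any internal edge among them triggers the $F$-embedding. The constant number of bad vertices cannot by themselves carry an odd cycle in each colour class, so one still reaches $\chi(G)\le k-1$. Rewriting your last paragraph along these lines would close the argument.
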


Using this, we can extend the above theorems from $K_k$ to certain graphs with color-critical edges. The main idea is that if an $F$-free graph does not have too many copies of $K_k$, then those create only a negligible amount of copies of $H$.

\begin{thm}\label{turgoodcrit}
	Let $F$ be a $k$-chromatic graph with a color-critical edge such that $\ex(n,K_k,F)=o(n^{k-1})$, and $H$ be a graph that is both $F$-Tur\'an-good and $k$-Tur\'an-good. Let $H'$ be any graph constructed from $H$ in the following way.
		Choose a complete subgraph of $H$ with vertex set $X$, add a vertex-disjoint copy of $K_{k-1}$ to $H$ and join the vertices in $X$ to the vertices of $K_{k-1}$ by edges arbitrarily.
	Then $H'$ is $F$-Tur\'an-good. Moreover, if $G$ is an $n$-vertex $F$-free graph with chromatic number more than $m-1$, then $\ex(n,H',F)-N(H',G)=\Omega(n^{|V(H')|-1})$.
\end{thm}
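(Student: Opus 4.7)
The plan is to split $n$-vertex $F$-free graphs $G$ into two cases according to $\chi(G)$. If $\chi(G)\le k-1$ then $G$ is $K_k$-free; since $H$ is $k$-Tur\'an-good, Theorem~\ref{turgood} implies that $H'$ is $k$-Tur\'an-good, so $N(H',G)\le N(H',T_{k-1}(n))$. The harder case $\chi(G)\ge k$ is handled by bootstrapping Lemma~\ref{stabi} from $K_{k-1}$-counts to $H'$-counts, with the aim of proving the stability bound
\[
N(H',T_{k-1}(n))-N(H',G)\;=\;\Omega\bigl(n^{|V(H')|-1}\bigr),
\]
which simultaneously yields the Tur\'an-good conclusion and the ``moreover'' part.

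For this second case, the starting point is Lemma~\ref{stabi} applied with $r=k-1$. Since $K_{k-1}$ is $F$-Tur\'an-good (via Ma and Qiu's extension of Simonovits's theorem), we obtain $N(K_{k-1},T_{k-1}(n))-N(K_{k-1},G)\ge c_1 n^{k-2}$ for some constant $c_1>0$. To transfer this deficit to $H'$, I would count copies of $H'$ in $G$ by first choosing the added $K_{k-1}$-copy $K$ and then the $H$-extension attached to $K$ through $X$. Copies of $H'$ whose vertex set induces a $K_k$ in $G$ are negligible: they number at most $N(K_k,G)\cdot O(n^{|V(H')|-k})=o(n^{|V(H')|-1})$ by the hypothesis $\ex(n,K_k,F)=o(n^{k-1})$. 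For the remaining copies the induced subgraph on the copy's vertex set is $K_k$-free, and applying Theorem~\ref{turgood} to this local $K_k$-free picture, together with the $F$-Tur\'an-goodness of $H$ controlling global $H$-counts in $G$, should yield
\[
N(H',G)\;\le\;\frac{N(H',T_{k-1}(n))}{N(K_{k-1},T_{k-1}(n))}\cdot N(K_{k-1},G)\;+\;o\bigl(n^{|V(H')|-1}\bigr).
\]
Since the ratio $N(H',T_{k-1}(n))/N(K_{k-1},T_{k-1}(n))$ is $\Theta(n^{|V(H)|})$, substituting the $K_{k-1}$-deficit gives the required $\Omega(n^{k-2})\cdot\Theta(n^{|V(H)|})=\Omega(n^{|V(H')|-1})$ gap.

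The main obstacle is the averaging step bounding the $H$-extension count per $K_{k-1}$-copy on the $K_k$-free part of $G$ by the symmetric Tur\'an-graph value, with total error $o(n^{|V(H')|-1})$ when summed over all $K_{k-1}$-copies of $G$. This is where both Tur\'an-good hypotheses on $H$ must be invoked: $k$-Tur\'an-goodness via Theorem~\ref{turgood} for the $K_k$-free local reduction, and $F$-Tur\'an-goodness for bounding global $H$-counts in $G$. The $o(n^{k-1})$ hypothesis on $\ex(n,K_k,F)$ is essential throughout, both for discarding $K_k$-containing configurations and for ensuring that the ``clean'' $H'$-copies can be treated as if they were living inside a $K_k$-free host graph.
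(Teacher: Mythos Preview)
Your outline matches the paper's proof almost step for step: the same $\chi(G)$ split, the same invocation of Lemma~\ref{stabi} for the $K_{k-1}$-deficit, the same separation of $H'$-copies according to whether their vertex set carries a $K_k$ in $G$, and the same product bound (clique count $\times$ $H$-count on the complement $\times$ extension factor, divided by an overcount depending only on $H'$). The one point to sharpen is your phrase ``applying Theorem~\ref{turgood} to this local $K_k$-free picture'': Theorem~\ref{turgood} is a global statement about $K_k$-free hosts and cannot be cited as a black box here, since $G$ itself is not $K_k$-free. What the paper does---and what your averaging step needs---is to re-run the Hall-type matching argument from the \emph{proof} of Theorem~\ref{turgood}: if the bipartite graph between the clique $X\subset V(H)$ and the added $K_{k-1}$ is not missing a matching covering $X$, then $X\cup V(K_{k-1})$ already contains a $K_k$, so for the ``clean'' copies the extension factor is bounded by its Tur\'an-graph value. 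With that lemma extracted, your displayed inequality follows exactly as the paper derives it, and the rest of your argument goes through.
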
 

\begin{proposition}\label{newturgoocrit}	Let $F$ be a $k$-chromatic graph with a color-critical edge such that $\ex(n,K_k,F)=o(n^{k-1})$.
 Let $H$ be a graph that is both $F$-Tur\'an-good and $k$-Tur\'an-good with a unique proper $(k-1)$-coloring. Let $H'$ consist of $H$ and a copy $K$ of $K_{k-1}$ with vertices $v_1,\dots,v_{k-1}$, with additional edges between $V(H)$ and $V(K)$ such that for every $i\le k-1$, there is a copy of $K_{k-1}$ in $H'$ containing $v_i$, but not containing $v_j$ for $j>i$. If $H'$ has chromatic number $k-1$, then $H'$ is $F$-Tur\'an-good. Moreover, if $G$ is an $n$-vertex $F$-free graph with chromatic number more than $m-1$, then $\ex(n,H',F)-N(H',G)=\Omega(n^{|V(H')|-1})$.
\end{proposition}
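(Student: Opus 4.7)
The plan is to follow the proof of Theorem \ref{turgoodcrit} mutatis mutandis, with the sole modification that where that proof invokes Theorem \ref{turgood} in the subcase of $K_k$-free ambient graphs, my proof invokes Proposition \ref{newturgoo} instead. Fix an $n$-vertex $F$-free graph $G$ with $n$ sufficiently large and split into cases according to $\chi(G)$.

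Suppose first that $\chi(G) \le k-1$, so that $G$ is $K_k$-free. Since $H$ is $k$-Turán-good with a unique proper $(k-1)$-coloring and $H'$ is constructed from $H$ in precisely the manner prescribed by Proposition \ref{newturgoo}, that proposition tells us $H'$ is $k$-Turán-good, hence $\cN(H',G) \le \cN(H', T_{k-1}(n))$. This is the only place where Proposition \ref{newturgoo} replaces Theorem \ref{turgood}; everything else carries over from Theorem \ref{turgoodcrit}.

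Suppose now that $\chi(G) \ge k$. The goal is to prove the deficit $\cN(H', T_{k-1}(n)) - \cN(H', G) = \Omega(n^{|V(H')|-1})$. Following the strategy of Theorem \ref{turgoodcrit}, enumerate copies of $H'$ in $G$ by first embedding the added clique $K$ as some $K_{k-1}$ in $G$ and then extending to a copy of $H$ with the prescribed bipartite edges. The $F$-Turán-goodness of $H$ bounds the number of $H$-copies by $\cN(H, T_{k-1}(n))$; the hypothesis $\ex(n,K_k,F) = o(n^{k-1})$ ensures that only $o(n^{k-1})$ of the $K_{k-1}$-copies in $G$ sit inside a $K_k$, contributing negligibly; and Lemma \ref{stabi} applied with $r = k-1$ yields a $\cN(K_{k-1}, T_{k-1}(n)) - \cN(K_{k-1}, G) = \Omega(n^{k-2})$ gap. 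The structural condition on the $v_i$'s — which is the same condition exploited in Proposition \ref{newturgoo} — allows a greedy vertex-by-vertex extension of each candidate $K_{k-1}$ up to a full copy of $H'$, so that each missing $K_{k-1}$ in $G$ corresponds to order $n^{|V(H')|-k+1}$ missing copies of $H'$. Combining the bounds produces the claimed deficit.

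The main obstacle lies in this last bookkeeping step: carefully transforming an $\Omega(n^{k-2})$ deficit at the $K_{k-1}$ level into an $\Omega(n^{|V(H')|-1})$ deficit at the $H'$ level, while simultaneously absorbing the $o(n^{k-1})$ bad $K_{k-1}$-copies that do extend to a $K_k$ in $G$. Because the greedy extension argument unlocked by the structural condition on the $v_i$'s is structurally identical to the one used in the proof of Proposition \ref{newturgoo}, and because the remaining ingredients are exactly those from the proof of Theorem \ref{turgoodcrit}, no essentially new technique is required — one merely substitutes Proposition \ref{newturgoo} for Theorem \ref{turgood} in the $K_k$-free subcase and repeats the counting argument.
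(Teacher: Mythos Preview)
Your proposal is correct and follows essentially the same route as the paper's own sketch: case-split on $\chi(G)$, invoke Proposition~\ref{newturgoo} when $G$ is $K_k$-free, and when $\chi(G)\ge k$ combine the $\Omega(n^{k-2})$ deficit in $K_{k-1}$-count from Lemma~\ref{stabi} with the greedy extension argument of Proposition~\ref{newturgoo}, separately absorbing the $o(n^{|V(H')|-1})$ copies of $H'$ that host a $K_k$ via the hypothesis $\ex(n,K_k,F)=o(n^{k-1})$. One minor point of phrasing: the correct split (as in the paper and in Theorem~\ref{turgoodcrit}) is by whether the \emph{whole copy of $H'$} contains a $K_k$ on its vertex set---this is exactly what is needed for the greedy step ``at most one common neighbor in $K'$'' to go through---rather than by whether the chosen image of $K$ alone sits inside a $K_k$; but since you explicitly defer to the proof of Theorem~\ref{turgoodcrit}, which does precisely this, there is no genuine gap.
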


Let us note the extra assumption $\ex(n,K_k,F)=o(n^{k-1})$. For $k=3$, an example is $F=C_{2\ell+1}$, as $\ex(n,K_3,C_{2\ell+1})=O(n^{1+1/\ell})$ due to Gy\H ori and Li \cite{LiGy}. Another example is the book $B_t$, which consists of an edge $uv$ and $t$ other vertices, that are adjacent to both $u$ and $v$. Alon and Shikhelman \cite{as} showed $\ex(n,K_3,B_t)=o(n^2)$.

Gerbner and Palmer \cite{gp3} conjectured that paths $P_m$ and even cycles $C_{2m}$ are $C_{2\ell+1}$-Tur\'an-good for any $m$ and $\ell$. They proved this conjecture for $P_4$ and $\ell=2$. They also showed that if $P_{2m}$ is $C_{2\ell+1}$-Tur\'an-good, then $C_{2m}$ is $C_{2\ell+1}$-Tur\'an-good too. Thus we can fully resolve their conjecture using Theorem \ref{turgoodcrit} or Proposition \ref{newturgoocrit}.

\begin{cor}
For any positive integers $m$ and $\ell$, $P_m$ and $C_{2m}$ are $C_{2\ell+1}$-Tur\'an-good.
\end{cor}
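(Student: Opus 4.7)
The plan is to induct on $m$ to establish the path case using Theorem \ref{turgoodcrit}, and then to deduce the even cycle case from the Gerbner--Palmer reduction recalled in the introduction.

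Before starting the induction, I collect the ingredients. The cycle $C_{2\ell+1}$ has chromatic number $3$ and every edge is color-critical (deletion leaves a path, hence a bipartite graph), and $\ex(n,K_3,C_{2\ell+1})=O(n^{1+1/\ell})=o(n^2)$ by Gy\H ori--Li, so the $o(n^{k-1})$ hypothesis of Theorem \ref{turgoodcrit} with $k=3$ is in force. I also need $P_m$ to be $3$-Tur\'an-good for every $m$: for $m\le 2$ this is Mantel's theorem or trivial, and for $m\ge 3$ it follows from Theorem \ref{gpl} with $k=3$, since $P_m$ is bipartite, the alternating edges $u_1u_2,u_3u_4,\dots$ form a matching of size $\lfloor m/2\rfloor$, and any two vertices in a component are joined by the path itself, which supplies the required chain of $K_2$s each sharing a vertex with the next.

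For the induction on $m$, the base cases $m=1,2$ are clear: $\ex(n,P_1,C_{2\ell+1})=n=\cN(P_1,T_2(n))$ is trivial, and for $m=2$ Simonovits's classical extension of Tur\'an's theorem to graphs with a color-critical edge gives $\ex(n,K_2,C_{2\ell+1})=\lfloor n^2/4\rfloor=\cN(K_2,T_2(n))$ for large $n$. For the step $m\ge 3$, assume $P_{m-2}$ is $C_{2\ell+1}$-Tur\'an-good. Label the vertices of $P_{m-2}$ as $u_1,\dots,u_{m-2}$, take $X=\{u_{m-2}\}$ (a $K_1$ in $P_{m-2}$), adjoin a vertex-disjoint $K_2$ on new vertices $v_1,v_2$, and add only the single edge $u_{m-2}v_1$ between $X$ and the new $K_2$. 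The resulting graph is exactly $P_m$, so Theorem \ref{turgoodcrit} applied with $H=P_{m-2}$ (which is both $3$- and $C_{2\ell+1}$-Tur\'an-good) yields that $P_m$ is $C_{2\ell+1}$-Tur\'an-good. Applying this with $m$ replaced by $2m$ and then invoking the Gerbner--Palmer implication ``$P_{2m}$ Tur\'an-good implies $C_{2m}$ Tur\'an-good'' completes the corollary.

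No step is genuinely difficult once Theorem \ref{turgoodcrit} is available; the only point worth a careful look is that the construction of $P_m$ from $P_{m-2}$ is an admissible instance of Theorem \ref{turgoodcrit}, i.e.\ that we are allowed to add only the single edge $u_{m-2}v_1$ between $X$ and $V(K_{k-1})$ rather than all possible edges, which the ``join by edges arbitrarily'' clause of the theorem explicitly permits.
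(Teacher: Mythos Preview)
Your proof is correct and follows exactly the approach the paper indicates: verify the hypotheses of Theorem~\ref{turgoodcrit} for $F=C_{2\ell+1}$ (using Gy\H{o}ri--Li for the $o(n^{k-1})$ bound and Theorem~\ref{gpl} for the $3$-Tur\'an-goodness of paths), run the induction by gluing a $K_2$ onto the end of $P_{m-2}$, and finish the cycle case via the Gerbner--Palmer reduction. The paper gives no further detail beyond saying the corollary follows from Theorem~\ref{turgoodcrit} or Proposition~\ref{newturgoocrit}, so your write-up is simply a fleshed-out version of the intended argument.
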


Gerbner and Palmer \cite{gp3} also showed that $P_4$ is $B_2$-Tur\'an-good. We can generalize it as follows.

\begin{cor} For any positive integers $m$ and $t$, 
$P_m$ is $B_t$-Tur\'an-good.
\end{cor}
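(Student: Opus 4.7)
The plan is to apply Theorem~\ref{turgoodcrit} inductively, with $F = B_t$ and $k = 3$, attaching a single $K_2$ (an edge) to an endpoint of a shorter path at each step. First I would verify the hypotheses on $F$: the book $B_t$ is $3$-chromatic with color-critical edge $uv$ (whose removal leaves the bipartite graph $K_{2,t}$), and $\ex(n, K_3, B_t) = o(n^2)$ is precisely the Alon--Shikhelman bound cited in the excerpt.

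For the base cases, $P_1$ is a single vertex, so $\cN(P_1, G) = n$ in every $n$-vertex graph, making it trivially both $B_t$-Tur\'an-good and $3$-Tur\'an-good. The graph $P_2 = K_2$ is $B_t$-Tur\'an-good by Simonovits's theorem, which gives $\ex(n, B_t) = e(T_2(n))$ for large $n$, and is $3$-Tur\'an-good by Tur\'an's theorem.

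For the inductive step, suppose $P_m$ is $B_t$-Tur\'an-good. It is also $3$-Tur\'an-good: for $m \ge 3$ this follows from Theorem~\ref{gpl} (the path $P_m$ is bipartite and connected, it contains a matching of size $\lfloor m/2 \rfloor$, and for $k=3$ the clique-chain connection hypothesis reduces to component connectivity), while for $m \le 2$ it is trivial. I would then apply Theorem~\ref{turgoodcrit} with $H = P_m$, taking $X$ to be a single endpoint of $P_m$ (a $1$-clique), adjoining a vertex-disjoint $K_2$ on new vertices $u_1, u_2$, and adding only the single edge from the endpoint to $u_1$ (permitted since the connecting edges may be chosen arbitrarily). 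The resulting graph is precisely $P_{m+2}$, which is therefore $B_t$-Tur\'an-good; induction starting separately from $P_1$ and $P_2$ then covers all $m \ge 1$.

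The argument is essentially a direct application of Theorem~\ref{turgoodcrit}, so the main point requiring care is not a deep obstacle but rather an interpretation: one must read ``join the vertices in $X$ to the vertices of $K_{k-1}$ by edges arbitrarily'' liberally enough to permit a single connecting edge, so that the resulting graph is a path $P_{m+2}$ rather than a triangle attached to $P_m$.
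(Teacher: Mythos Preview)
Your proof is correct and matches the paper's intended approach: the corollary is stated immediately after Theorem~\ref{turgoodcrit} (and Proposition~\ref{newturgoocrit}) with no separate proof, precisely because it follows by inductively applying Theorem~\ref{turgoodcrit} with $k=3$ and $F=B_t$, attaching a $K_2$ at an endpoint at each step. Your reading of ``join the vertices in $X$ to the vertices of $K_{k-1}$ by edges arbitrarily'' as permitting a single connecting edge is the intended one, and your verification of the base cases and of the $3$-Tur\'an-good hypothesis via Theorem~\ref{gpl} is exactly right.
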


Gerbner and Palmer \cite{gp3} showed an example where a graph $H$ is $F$-Tur\'an-good, and $F$ does not have a color-critical edge ($H=C_4$ and $F$ is the \textit{2-fan}, two triangles sharing a vertex). Observe that one can add additional edges to the Tur\'an graph without violating the $F$-free property in this case. However, those edges cannot create additional copies of $H$. A natural question is  for what graphs $F$ can we find an $H$ such that $H$ is $F$-Tur\'an-good?

We say that a vertex $v$ of a graph $G$ is \textit{color-critical} if by deleting $v$ from $G$ we obtain a graph with smaller chromatic number. 

\begin{thm}\label{critver}
    There exists an $F$-Tur\'an-good graph if and only if $F$ has a color-critical vertex.
\end{thm}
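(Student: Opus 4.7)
I handle the two directions separately. For the $(\Rightarrow)$ direction, assume $F$ has no color-critical vertex and let $H$ be an arbitrary graph with at least one edge. I construct an $F$-free graph $G$ on $n$ vertices with $\cN(H, G) > \cN(H, T_{k-1}(n))$, showing $H$ is not $F$-Tur\'an-good. Let $A_1, \dots, A_{k-1}$ be the parts of $T_{k-1}(n)$, fix $v' \in A_1$, and add every edge between $v'$ and $A_1 \setminus \{v'\}$, so that $v'$ becomes adjacent to every other vertex of $G$. Then $G$ is $F$-free: any copy of $F$ must use $v'$ (as $G - v' = T_{k-1}(n) - v'$ has chromatic number $k-1$), and $v'$ playing the role of some $v_0 \in V(F)$ forces $F - v_0 \subseteq G - v'$, giving $\chi(F - v_0) \leq k-1$, contradicting the hypothesis. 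To exhibit an extra copy of $H$: when $\chi(H) \leq k-1$, fix an edge $xy$ of $H$, choose a proper $(k-1)$-coloring of $H - y$ in which $x$ has color $1$ (by permuting colors), embed $H - y$ into $T_{k-1}(n) \setminus \{v'\}$ by sending color $i$ into $A_i$, and set $\phi(y) := v'$. Then $xy$ maps to the newly added edge $v' \phi(x)$ with $\phi(x) \in A_1 \setminus \{v'\}$, producing a copy of $H$ in $G$ not present in $T_{k-1}(n)$. When $\chi(H) \geq k$ and $H$ is $F$-free, $\cN(H, T_{k-1}(n)) = 0$ while $H$ padded with isolated vertices is an $F$-free graph realizing $\cN(H, \cdot) \geq 1$.

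For the $(\Leftarrow)$ direction, let $v$ be a color-critical vertex of $F$. I propose $H = K_{s, s, \dots, s}$, the complete balanced $(k-1)$-partite graph with $s$ vertices per part, for $s$ large in terms of $|V(F)|$. Since $\chi(H) = k-1$, $H$ does not contain $F$; the task is to show $\cN(H, G) \leq \cN(H, T_{k-1}(n))$ for every $F$-free graph $G$ on $n$ vertices. When $G$ is $(k-1)$-partite with part sizes $a_1, \ldots, a_{k-1}$, then $\cN(H, G) \leq \prod_i \binom{a_i}{s}$, which is maximized at $a_i = n/(k-1)$ by the log-concavity of $\binom{\cdot}{s}$, giving the bound $\cN(H, T_{k-1}(n))$. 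For non-$(k-1)$-partite $F$-free $G$, I appeal to Simonovits's exact extremal-structure theorem for graphs with a color-critical vertex: near-extremal $F$-free graphs take the form $T_{k-1}(n) + E'$, where $E' \subseteq \binom{A_i}{2}$ is a perturbation of bounded size inside a single part, constrained to avoid certain fixed ``$F$-critical'' subgraphs. Any copy of $K_{s, \dots, s}$ using an edge of $E'$ must have two of its $s$-sized parts lying entirely inside $A_i$ with all cross-pairs in $E'$, which forces $E'$ to contain $K_{s,s}$; this is impossible once $s$ exceeds the maximum size of the forbidden $F$-critical subgraphs.

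The main obstacle is the non-$(k-1)$-partite case of the $(\Leftarrow)$ direction. It requires a stability-type reduction analogous to Lemma \ref{stabi} but tailored to color-critical vertex rather than color-critical edge, combined with Simonovits's exact structural theorem for $F$-free extremal graphs when $F$ has a color-critical vertex. An alternative approach via Zykov-type symmetrization would demand careful verification that the symmetrization step preserves $F$-freeness; the delicate case arises precisely when both vertices involved in the symmetrization lie in a putative copy of $F$, and it is here that the color-critical vertex hypothesis must be invoked to rule out the creation of a new copy of $F$.
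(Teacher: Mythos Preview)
Your $(\Rightarrow)$ direction is correct and is essentially the paper's argument: the same graph $T_{k-1}(n)$ with one universal vertex is used, and the verification that it is $F$-free and strictly beats the Tur\'an graph is the same.

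The $(\Leftarrow)$ direction is where the real content lies, and here your proposal is only a sketch with genuine gaps, as you yourself flag. Two specific issues:

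\begin{itemize}
\item The ``Simonovits exact structural theorem'' you invoke does not say what you need. Simonovits's result concerns \emph{edge}-extremal $F$-free graphs, not $H$-copy-extremal ones; there is no a~priori reason an $F$-free graph maximizing $\cN(K_{s,\dots,s},\,\cdot\,)$ should be near-edge-extremal. Moreover, the structural description you quote is wrong even for edges: when $F$ has a color-critical vertex (say $F=K_{1,a,\dots,a}$), the extra edge set $E'$ over $T_{k-1}(n)$ is neither confined to a single part nor of bounded size --- every vertex may carry up to $a-1$ extra edges inside its own part, so $|E'|$ can be linear in $n$.
\item The Zykov-symmetrization alternative you mention does not preserve $F$-freeness once $F$ is not a clique, and you give no mechanism to repair this.
\end{itemize}

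The paper closes the $(\Leftarrow)$ direction differently. It first reduces to the case $F=K_{1,a,\dots,a}$ (since any $F$ with a color-critical vertex embeds in such a graph), and then proves directly (Theorem~\ref{compl}) that $H=K_{b,\dots,b}$ with $b>2a-2$ is $F$-Tur\'an-good. That proof does not cite an off-the-shelf structure theorem; instead it uses progressive induction (Lemma~\ref{progin}), locates a copy of $K_{2a-1,\dots,2a-1}$ inside an extremal $G$, and through a sequence of claims shows that the vertices of $G$ partition into sets $B_1,\dots,B_{k-1}$ each of which is nearly independent (fewer than $a$ internal neighbours per vertex), from which the bound follows. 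The choice $b>2a-2$ is exactly what guarantees that no copy of $H$ can use one of the sparse ``extra'' edges inside a part --- this is the correct replacement for your ``bounded $E'$'' heuristic. Your intuition that a large balanced complete $(k-1)$-partite $H$ should work is right, but the argument establishing it is the substance of the theorem and cannot be outsourced.
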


We prove Proposition \ref{newturgoo}, Theorem \ref{turgoodcrit} and Proposition \ref{newturgoocrit} in Section 2, and we prove Theorem \ref{critver} in Section 3. We finish the paper with some concluding remarks in Section 4.

\section{Graphs with a color-critical edge}

We start with the proof of Proposition \ref{newturgoo}, that we restate here for convenience.

\begin{proposition*}
 Let $H$ be a $k$-Tur\'an-good graph with a unique proper $(k-1)$-coloring. Let $H'$ consist of $H$ and a copy $K$ of $K_{k-1}$ with vertices $v_1,\dots,v_{k-1}$, with additional edges between $V(H)$ and $V(K)$ such that for every $i\le k-1$, there is a copy of $K_{k-1}$ in $H'$ containing $v_i$, but not containing $v_j$ for $j>i$. If $H'$ has chromatic number $k-1$, then $H'$ is $k$-Tur\'an-good.
\end{proposition*}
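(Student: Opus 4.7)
My plan is to root copies of $H'$ on the distinguished clique $K$ and reduce to the $k$-Tur\'an-goodness of $H$, in the spirit of Theorems \ref{gpl} and \ref{turgood}.

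Let $c:V(H)\to[k-1]$ be the unique proper $(k-1)$-coloring of $H$, with color classes $V_1,\dots,V_{k-1}$. Since $H'$ is $(k-1)$-colorable and $v_1,\dots,v_{k-1}$ form a clique, every proper $(k-1)$-coloring of $H'$ extends $c$ (after permuting the color names) by setting $c(v_i)=\sigma(i)$ for some permutation $\sigma\in S_{k-1}$ with $\sigma(i)\ne c(w)$ for every edge $v_iw\in E(H')$ with $w\in V(H)$; let $\Sigma$ denote the (nonempty) set of such $\sigma$. For a $K_k$-free graph $G$ on $n$ vertices and an ordered $K_{k-1}$-copy $\mathbf u=(u_1,\dots,u_{k-1})$ of $G$, let $e_G(\mathbf u)$ be the number of embeddings $\psi:V(H)\hookrightarrow V(G)\setminus\{u_1,\dots,u_{k-1}\}$ with $\psi(w)\sim_G u_i$ whenever $v_iw\in E(H')$. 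Then the number of ordered copies of $H'$ in $G$ equals $\sum_{\mathbf u} e_G(\mathbf u)$, summed over ordered $K_{k-1}$-copies of $G$.

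For fixed $\mathbf u$ and $\sigma\in\Sigma$, I would define a spanning $(k-1)$-partite subgraph $G_{\mathbf u,\sigma}$ of $G$ by placing $u_i$ into part $\sigma(i)$ and each remaining vertex $x$ into some part $j$ with $x\not\sim u_{\sigma^{-1}(j)}$ (such a $j$ exists by $K_k$-freeness). The uniqueness of the $(k-1)$-coloring of $H$ forces any $\mathbf u$-compatible embedding $\psi$ whose color pattern matches $\sigma$ to map $V_j$ into part $\sigma(j)$ of $G_{\mathbf u,\sigma}$, so $\psi$ is actually an embedding into the $K_k$-free graph $G_{\mathbf u,\sigma}$. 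Applying the $k$-Tur\'an-goodness of $H$ to $G_{\mathbf u,\sigma}$ then bounds the number of such $\psi$ by the analogous quantity in $T_{k-1}(n)$ for a matching ordered $K_{k-1}$-copy. Summing over $\mathbf u$ and $\sigma$, and using the classical bound on ordered $K_{k-1}$-counts (uniquely attained by $T_{k-1}(n)$), yields $\cN(H',G)\le\cN(H',T_{k-1}(n))$. The hypothesis that for each $i\le k-1$ there is a $K_{k-1}$ in $H'$ containing $v_i$ but no later $v_j$ is used to ensure $T_{k-1}(n)$ actually attains this bound, by guaranteeing that each ordered $K_{k-1}$-copy in $T_{k-1}(n)$ admits the full compatible $H$-count; it plays the same role as the clique-connectivity condition in Theorem \ref{gpl}.

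The main obstacle is controlling the $\mathbf u$-compatible embeddings whose images use an edge of $G$ internal to a part of $G_{\mathbf u,\sigma}$ for every $\sigma\in\Sigma$ — these are not embeddings into any of the $G_{\mathbf u,\sigma}$, so the argument above does not directly bound them. Any such embedding would induce a $(k-1)$-coloring of $H$ differing from every permutation of $c$, contradicting uniqueness; however, a rigorous proof needs to quantify that their total contribution is a lower-order term in $n$ compared to $\cN(H',T_{k-1}(n))=\Theta(n^{|V(H')|})$. This likely requires either an explicit case analysis or a stability-type estimate in the vein of Lemma \ref{stabi}, ensuring that the dominant contribution to $\cN(H',G)$ comes from partition-respecting embeddings and matches $\cN(H',T_{k-1}(n))$.
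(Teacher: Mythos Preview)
Your approach diverges from the paper's in a way that creates the very gap you identify. You root on the ordered clique $\mathbf u$ and then try to bound the number of compatible $H$-embeddings $e_G(\mathbf u)$ via an auxiliary partition $G_{\mathbf u,\sigma}$; the paper does the opposite. It first fixes an (unordered) copy $K'$ of $K_{k-1}$ and an embedding of $H$ into $G\setminus V(K')$, and then shows that there is \emph{at most one} way to embed $v_1,\dots,v_{k-1}$ into $K'$ compatibly. This is exactly where the hypothesis on the cliques through $v_i$ is used: processing $v_1,\dots,v_{k-1}$ in order, each $v_i$ lies in a $K_{k-1}$ whose other $k-2$ vertices are already embedded, and those $k-2$ images have at most one common neighbour in $K'$ since $G$ is $K_k$-free. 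The unique-colouring hypothesis then certifies that in $T_{k-1}(n)$ this one completion actually exists. With this ``at most one completion'' fact in hand, the count factorises cleanly as $\cN(K_{k-1},G)\cdot\cN(H,G-K')\cdot|\Aut(H)|\cdot 1$, and each factor is maximised by the Tur\'an graph via Zykov and the $k$-Tur\'an-goodness of $H$.

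Your obstacle is real and, as stated, not fixable along the lines you suggest. An embedding $\psi$ of $H$ into a $K_k$-free graph $G$ does not induce a canonical $(k-1)$-colouring of $H$: the part assignment in $G_{\mathbf u,\sigma}$ is non-canonical (a vertex may be non-adjacent to several $u_i$), so ``$\psi$ uses an internal edge for every $\sigma$'' does not produce a $(k-1)$-colouring of $H$ at all, let alone one contradicting uniqueness. Moreover, even for the ``good'' embeddings that do respect some $G_{\mathbf u,\sigma}$, invoking the $k$-Tur\'an-goodness of $H$ only gives $\cN(H,G_{\mathbf u,\sigma})\le\cN(H,T_{k-1}(n))$, which does not match the quantity you need, namely the number of $\mathbf u'$-compatible $H$-embeddings in $T_{k-1}(n)$ for a single ordered clique $\mathbf u'$; summing over $\mathbf u$ and over $\sigma\in\Sigma$ then overcounts. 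The clean way out is to reverse the order of rooting as the paper does, which makes the clique hypothesis do all the work and eliminates the need for any partition or stability argument.
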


\begin{proof} Let $G$ be a $K_k$-free graph on $n$ vertices.
We will count the copies of $H'$ in $G$ the following way. First we pick a copy $K$ of $K_{k-1}$, then a vertex-disjoint copy $H'$ of $H$. Then we pick an actual embedding of $H$ into $H'$, and afterwards an actual embedding of $K$ into $K'$ such that the images of the remaining edges of $H'$ are present in $G$. We claim that $T_{k-1}(n)$ gives the maximum for each of the above four factors, finishing the proof.

We have $\cN(K_{k-1},G)\le \cN(H,T_{k-1}(n))$ by a theorem of Zykov \cite{zykov} (this particular case also follows from Theorem \ref{gpl}), thus the number of ways to pick a copy $K'$ of $K_{k-1}$ in $G$ is the largest if $G$ is the Tur\'an graph. Then there are at most $\ex(n-k+1,H,K_k)$ ways to pick a vertex-disjoint copy of $H$, which is at most $T_{k-1}(n-k+1)$ for $n$ large enough, as $H$ is $k$-Tur\'an-good. Observe that we have equality here in case $G=T_{k-1}(n)$, as removing a maximal clique from the Tur\'an graph gives a smaller Tur\'an graph.
The number of ways $H$ can be embedded into $H'$ is the number of isomorphisms of $H$ and does not depend on $G$. 

After $H$ is embedded, we claim that there is at most one way to finish the embedding. We pick the images of the vertices $v_i$ from $K$ one by one, in the order of their indices. For each $v_i$, it is contained in a copy $K''$ of $K_{k-1}$ that is already embedded. The other $k-2$ vertices of $K''$ are already embedded, and their images have at most one common neighbor in $K'$, as $G$ is $K_k$-free. This means we only have one vertex that can be picked as $v_i$.

Finally, we show that in the Tur\'an graph, there is a way to finish the embedding. The other $k-2$ vertices of $K''$ that are already embedded must belong to different partite classes of the Tur\'an graph, thus $v_i$ must be from the remaining partite class. We have to show that this way the $v_i$'s are mapped to different vertices. This is where we use the unique coloring property of $H$. Let $j$ be a color class in this unique coloring, then all the vertices of color $j$ are mapped to the same partite class $A_j$ of the Tur\'an graph. If $v_i$ is of color $j$ in $H'$, then the other $k-2$ vertices of $K''$ are not of color $j$, thus they are not in $A_j$, hence $v_i$ is mapped to the vertex of $K'$ that belong to $A_j$. As the vertices of $K$ belong to different color classes in $H$, they are mapped to different partite classes of the Tur\'an graph, finishing the proof.
\end{proof}

Let us continue with the proof of Theorem \ref{turgoodcrit}, that we restate here for convenience.

\begin{theorem*}	Let $F$ be a $k$-chromatic graph with a color-critical edge such that $\ex(n,K_k,F)=o(n^{k-1})$, and $H$ be a graph that is both $F$-Tur\'an-good and $k$-Tur\'an-good. Let $H'$ be any graph constructed from $H$ in the following way.
		Choose a complete subgraph of $H$ with vertex set $X$, add a vertex-disjoint copy of $K_{k-1}$ to $H$ and join the vertices in $X$ to the vertices of $K_{k-1}$ by edges arbitrarily.
	Then $H'$ is $F$-Tur\'an-good. Moreover, if $G$ is an $n$-vertex $F$-free graph with chromatic number more than $m-1$, then $\ex(n,H',F)-N(H',G)=\Omega(n^{|V(H')|-1})$.
\end{theorem*}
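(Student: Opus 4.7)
The proof adapts the Gerbner--Palmer argument for Theorem \ref{turgood} to the $F$-free setting, using the hypothesis $\ex(n,K_k,F)=o(n^{k-1})$ to control configurations present in $F$-free but not $K_k$-free graphs. Let $G$ be an $n$-vertex $F$-free graph, and split $\cN(H',G) = \cN_A + \cN_B$, where $\cN_A$ counts copies $\phi$ of $H'$ in $G$ with $G[\phi(V(H'))]$ $K_k$-free, and $\cN_B$ counts the rest. For $\cN_B$, each $K_k$ in $G$ is contained in the image of at most $C\cdot n^{|V(H')|-k}$ copies of $H'$ (for some $C = C(H')$), so
\[
\cN_B \;\le\; \cN(K_k,G)\cdot C\cdot n^{|V(H')|-k} \;\le\; C\cdot \ex(n,K_k,F)\cdot n^{|V(H')|-k} \;=\; o(n^{|V(H')|-1}).
\]

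For $\cN_A$, the copies are locally $K_k$-free, so the key structural property used by Gerbner--Palmer---that no vertex in the image of $V(H)$ is a common neighbor of the image of $V(K)$---holds automatically for each such $\phi$. Re-running their counting argument while tracking only this local condition yields $\cN_A \le \cN(H',T_{k-1}(n))$; this uses the Ma--Qiu bound $\cN(K_{k-1},G) \le \cN(K_{k-1},T_{k-1}(n))$ together with $H$ being $F$-Tur\'an-good (which bounds the per-$K_{k-1}$ extension count by its analogue in $T_{k-1}(n)$). If $\chi(G)\le k-1$ then $G$ is $K_k$-free, so $\cN_B=0$ and Theorem \ref{turgood} applies directly.

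For the moreover part, assume $\chi(G)\ge k$ and apply Lemma \ref{stabi} with $r=k-1$: $\cN(K_{k-1},G) \le \cN(K_{k-1},T_{k-1}(n)) - \Omega(n^{k-2})$. Since each $K_{k-1}$ contributes $\Theta(n^{|V(H)|})$ extensions in the counting above, this deficit propagates to $\cN_A \le \cN(H',T_{k-1}(n)) - \Omega(n^{|V(H')|-1})$, and combined with $\cN_B = o(n^{|V(H')|-1})$ this gives a net deficit $\Omega(n^{|V(H')|-1})$, establishing both the $F$-Tur\'an-good conclusion and the moreover part. The main obstacle is verifying that the Gerbner--Palmer counting adapts to the $\cN_A$ restriction: since $G$ itself may contain $K_k$'s, Theorem \ref{turgood} cannot be invoked as a black box, and the argument must be re-run restricted to $K_k$-free images. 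This amounts to checking that every use of ``$G$ is $K_k$-free'' in the original proof reduces to the weaker local condition defining $\cN_A$, which should hold since the key use is the empty common neighborhood of the relevant $K_{k-1}$ within the image.
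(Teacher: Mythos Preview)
Your proposal is correct and follows essentially the same approach as the paper's proof: both split into copies of $H'$ whose image is $K_k$-free versus those containing a $K_k$, bound the latter by $o(n^{|V(H')|-1})$ via $\ex(n,K_k,F)=o(n^{k-1})$, and for the former re-run the Gerbner--Palmer argument using the local $K_k$-free condition together with Ma--Qiu and Lemma~\ref{stabi}. One minor imprecision: the ``key structural property'' in Gerbner--Palmer is not merely that no vertex of $\phi(V(H))$ is a common neighbor of $\phi(V(K))$, but rather that the bipartite graph between $\phi(X)$ and $\phi(V(K))$ is missing a matching covering $\phi(X)$ (via Hall's theorem); however, your concluding paragraph correctly identifies that the relevant uses of $K_k$-freeness in the original proof all localize to the image, so the argument goes through.
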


\begin{proof}
By a result of Ma and Qiu \cite{mq}, if $n$ is large enough, then the maximum number of copies of $K_{k-1}$ in an $F$-free graph is achieved by the Tur\'an graph $T_{k-1}(n)$. Since $H$ is $F$-Tur\'an-good, the Tur\'an graph $T_{k-1}(n-k+1)$ has the maximum number of copies of $H$ among $F$-free graphs on $n-k+1$ vertices. We will show that $T_{k-1}(n)$ has the maximum number of copies of $H'$.
	
	Let $G$ be an $F$-free graph on $n$ vertices with the maximum number of copies of $H'$. If $G$ has chromatic number at most $k-1$, then $G$ is $K_k$-free, thus we are done by Theorem \ref{turgood}. If $G$ has chromatic number more than $k-1$, then by Lemma \ref{stabi} we have  $\cN(K_{k-1},T_{k-1}(n))-\cN(K_{k-1},G)=\Omega(n^{k-2})$. 
	
We follow the proof of Theorem \ref{turgood} from \cite{gp3}. We take a complete subgraph $Y$ of $G$, disjoint from $K$, and consider the bipartite subgraph $G'$ of $G$ consisting of the edges between $K$ and $Y$. It was shown in \cite{gp3} that if $G$ is $K_k$-free, then a matching covering $Y$ is missing from $G'$. It is easy to see that if such a matching is not missing, then not only there is a $K_k$ in $G$, but there is a $K_k$ with vertices in $Y\cup V(K)$. For sake of completeness, we repeat the argument here. By Hall's theorem, there is a subset $Y'\subset Y$ such that all the vertices of $Y'$ are connected to all but less than $Y'$ vertices of $K$. Then $Y'$ and those vertices form a clique of size at least $k$.

Let us count first the copies of $H'$ such that there is no $K_k$ in $G$ on the vertex sets of them. For those copies there is a matching missing from $G'$. Observe that in the Tur\'an graph between a clique of size $k-1$ and a clique of size $|Y|$, only a matching covering the smaller clique is missing. This implies that after picking a copy of $H$, there are at least as many ways to connect the appropriate subclique of it to $K$ in the Tur\'an graph, as in $G$.

The number of such copies of $H'$ is at most the product of the number of copies of $K_{k-1}$, the number of copies of $H$ on the remaining $n-k+1$ vertices and the number of ways to join the vertices of $K_{k-1}$ and $H$, all divided by the number of times a copy of $H'$ was counted. The first quantity is less than $\cN(K_{k-1},T_{k-1}(n))$ by $\Omega(n^{k-2})$.
The second quantity is maximized by the Tur\'an graph and is $O(n^{|V(H)|})$, the third quantity is also maximized by the Tur\'an graph, while the last quantity depends only on $H'$. This implies that the number of such copies of $H'$ is $\cN(H',T_{k-1}(n))-\Omega(n^{|V(H')|-1})$.

Let us continue with the copies of $H'$ that contain a vertex set of $K_k$ in $G$. As $G$ is $F$-free, there are $o(n^{k-1})$ copies of $K_k$ in $G$, thus $o(n^{|V(H')|-1})$ copies of $H'$. Adding up the two bounds finishes the proof.
\end{proof}

Let us continue with Proposition \ref{newturgoocrit} that we restate here for convenience,
We only give a sketch of the proof, as it can be easily obtained by combining the above two proofs. We assume familiarity with those proofs.

\begin{proposition*}	Let $F$ be a $k$-chromatic graph with a color-critical edge such that $\ex(n,K_k,F)=o(n^{k-1})$.
 Let $H$ be an $F$-Tur\'an-good graph with a unique proper $(k-1)$-coloring. Let $H'$ consist of $H$ and a copy $K$ of $K_{k-1}$ with vertices $v_1,\dots,v_{k-1}$, with additional edges between $V(H)$ and $V(K)$ such that for every $i\le k-1$, there is a copy of $K_{k-1}$ in $H'$ containing $v_i$, but not containing $v_j$ for $j>i$. If $H'$ has chromatic number $k-1$, then $H'$ is $F$-Tur\'an-good. Moreover, if $G$ is an $n$-vertex $F$-free graph with chromatic number more than $m-1$, then $\ex(n,H',F)-N(H',G)=\Omega(n^{|V(H')|-1})$.
\end{proposition*}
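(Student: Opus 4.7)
The plan is to combine the four-stage counting from the proof of Proposition \ref{newturgoo} with the two-case split used in the proof of Theorem \ref{turgoodcrit}. Fix an $F$-free graph $G$ on $n$ vertices, and partition the copies of $H'$ in $G$ according to whether their vertex set spans a $K_k$ in $G$.

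First, I would handle the copies of $H'$ whose vertex set spans no $K_k$, applying the Proposition \ref{newturgoo} recipe almost verbatim: pick a copy $K'$ of $K_{k-1}$ in $G$, then a vertex-disjoint copy of $H$, then an embedding of the abstract $H$ into it, then extend to the vertices $v_1,\dots,v_{k-1}$ one by one. The number of $K_{k-1}$'s in an $F$-free graph is bounded by $\cN(K_{k-1},T_{k-1}(n))$ by the theorem of Ma and Qiu \cite{mq}; the number of vertex-disjoint $H$'s in the remaining graph is at most $\ex(n-k+1,H,F)=\cN(H,T_{k-1}(n-k+1))$ for $n$ large, since $H$ is $F$-Tur\'an-good; the embeddings contribute a factor of $|\Aut(H)|$; and the extension step is bounded by one, using the $K_k$-freeness of the vertex set of the copy under construction. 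The unique $(k-1)$-coloring hypothesis on $H$ then guarantees that all four factors are simultaneously attained by $G=T_{k-1}(n)$, producing $\cN(H',T_{k-1}(n))$ copies there.

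Second, I would bound copies whose vertex set contains a $K_k$ by $\ex(n,K_k,F)\cdot n^{|V(H')|-k}=o(n^{|V(H')|-1})$, which is negligible compared to $\cN(H',T_{k-1}(n))=\Theta(n^{|V(H')|})$. Together the two estimates show that $H'$ is $F$-Tur\'an-good. For the stability conclusion, I would invoke Lemma \ref{stabi} when $\chi(G)>k-1$ to obtain $\cN(K_{k-1},T_{k-1}(n))-\cN(K_{k-1},G)=\Omega(n^{k-2})$; this deficit propagates through the remaining factors (each $O(n^{|V(H')|-k+1})$) to give an $\Omega(n^{|V(H')|-1})$ deficit in the count of copies spanning no $K_k$, while copies spanning a $K_k$ contribute only $o(n^{|V(H')|-1})$. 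Summing yields $\ex(n,H',F)-\cN(H',G)=\Omega(n^{|V(H')|-1})$.

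The main subtlety to check is that the unique-extension step of Proposition \ref{newturgoo}, originally formulated under $K_k$-freeness of the whole of $G$, still applies under the weaker assumption that only the vertex set of the particular copy of $H'$ under construction spans no $K_k$. This is indeed the case: any failure of uniqueness at the step of placing $v_i$ would produce two common neighbors $w_1,w_2\in V(K')$ of the $k-2$ already-placed vertices of the relevant $K_{k-1}$ in $H'$, and $w_1,w_2$ together with those $k-2$ vertices form a $K_k$ lying entirely inside the vertex set of the copy of $H'$ being constructed---precisely what the case analysis excludes.
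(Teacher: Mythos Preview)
Your proposal is correct and follows essentially the same approach as the paper's own sketch: both split the copies of $H'$ according to whether their vertex set spans a $K_k$ in $G$, apply the four-step counting from Proposition~\ref{newturgoo} to the $K_k$-free copies, bound the remaining copies by $o(n^{|V(H')|-1})$ using $\ex(n,K_k,F)=o(n^{k-1})$, and invoke Lemma~\ref{stabi} for the stability statement. Your explicit verification that the unique-extension argument only requires $K_k$-freeness on the vertex set of the copy under construction (rather than on all of $G$) is a point the paper's sketch leaves implicit, and is exactly the check needed to make the combination go through.
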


\begin{proof}[Sketch of proof] Let $G$ be an $F$-free graph on $n$ vertices. If $G$ has chromatic number $k-1$, then it is $K_k$-free and we are done. If $G$ has chromatic number at least $k$, then by Lemma \ref{stabi} $G$ has less copies of $K_{k-1}$ than the Tur\'an graph by $\Omega(n^{k-2})$.

First we count copies of $H'$ in $G$ such that there are no $k$ vertices in that copy that induce a clique in $G$. For them, we can follow the proof of Proposition \ref{newturgoo}, with one exception: the number of $(k-1)$-cliques is less by $\Omega(n^{k-2})$, which implies that the number of copies of $H$ is less by $\Omega(n^{|V(H')|-1})$.

Then we count the other copies of $H'$: as there are $o(n^{k-1})$ copies of $K_k$ in $G$, we have that there are $o(n^{|V(H')|-1})$ such copies of $H$. Adding up the two bounds finishes the proof.
\end{proof}

\section{Graphs with a color-critical vertex}

We will use progressive induction. This is a version of induction that can be used to prove combinatorial statements that hold only for $n$ large enough. It was introduced by Simonovits \cite{miki}. It was used for a generalized Tur\'an problem in \cite{gerbner2}. The statement in \cite{miki} is very general, here we state a version adapted for generalized Tur\'an problems.

For a graph $G$ and a subgraph $G'$, we denote by $\cN_I(H,G,G')$ the number of copies of $H$ in $G$ that contain at least one vertex from $G'$. Given $H$ and $F$, we say that $G$ is an extremal graph if $\ex(n,H,F)=\cN(H,G)$.

\begin{lemma}\label{progin} Let $F$ and $H$ be graphs and $G_n$ be an $F$-free graph for every $n$.
Let $\cG_n$ be a family of $n$-vertex $F$-free graphs for every $n$, such that if $G_1,G_2\in\cG_n$, then $\cN(H,G_1)=\cN(H,G_2)$. 
Assume that there is an $n_0$ such that for every $n\ge n_0$, for every extremal graph $G$ on $n$ vertices, there is a subgraph $H'$ of $G$ with $|V(H')|\le n/2$, such that $H'$ is also the subgraph of some $G_n\in \cG_n$ and we have the following:
$\cN_I(H,G,H')\le \cN_I(H,G_n,H')$, with equality only if $G\in\cG_n$.

Then for $n$ large enough, $\ex(n,H,F)=\cN(H,G_n)$ for some $G_n\in\cG_n$. Moreover, every extremal graph belongs to $\cG_n$.
\end{lemma}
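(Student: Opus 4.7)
The plan is to apply Simonovits's method of progressive induction, tailored to generalized Tur\'an counts. Define
$$\phi(n) \;:=\; \ex(n,H,F) - \cN(H,G_n)$$
for any $G_n \in \cG_n$; the common-count assumption makes this well-defined, and $\phi(n)\ge 0$ since members of $\cG_n$ are $F$-free. The statement then reduces to showing $\phi(n)=0$ for all $n$ large enough, with equality only for graphs in $\cG_n$.

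I would proceed by strong induction on $n$, choosing a threshold $N \ge 2n_0$ and assuming the conclusion holds for every $m\in[n_0,n)$. Fix an extremal $F$-free graph $G$ on $n\ge N$ vertices. If $G\in\cG_n$ we are done; otherwise, invoke the hypothesis to obtain $H'\subseteq G$ with $|V(H')|\le n/2$ and some $G_n^*\in\cG_n$ containing $H'$, with $\cN_I(H,G,H')<\cN_I(H,G_n^*,H')$. Integrality widens this to a gap of at least $1$. Decomposing $\cN(H,\cdot)$ in $G$ and in $G_n^*$ into copies meeting $V(H')$ and those avoiding it, and subtracting, yields
$$\phi(n) \;\le\; \bigl[\cN(H, G-V(H')) - \cN(H, G_n^*-V(H'))\bigr] - 1,$$
where $m:= n-|V(H')|\ge n/2\ge n_0$.

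Next I would invoke the inductive hypothesis at scale $m$: it supplies $\ex(m,H,F)=\cN(H,G_m)$ together with the characterization that every extremal $F$-free graph on $m$ vertices lies in $\cG_m$. In particular $\cN(H, G-V(H'))\le \cN(H,G_m)$. The crucial complementary estimate is $\cN(H, G_n^*-V(H'))\ge \cN(H,G_m)$, namely that trimming a small subgraph from a member of $\cG_n$ at scale $n$ still leaves an (essentially) extremal configuration at scale $m$. Granting this, the bracket becomes nonpositive, forcing $\phi(n)\le -1$ and contradicting $\phi(n)\ge 0$; the only escape is $G\in\cG_n$, which simultaneously secures $\phi(n)=0$ and the uniqueness claim through the ``equality only if $G\in\cG_n$'' clause of the hypothesis.

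The main obstacle is exactly the lower bound $\cN(H, G_n^*-V(H'))\ge \cN(H,G_m)$: it is where the structural character of $\cG_n$ must be used. If this cannot be obtained in a single strike, I would instead iterate the one-step inequality to build a sequence $n=n_0>n_1>\dots$ with $n_{i+1}\ge n_i/2$ and $\phi(n_{i+1})\ge \phi(n_i)+1$, in the spirit of Simonovits's original progressive induction. Such a sequence halves the vertex count at each step and so has length $O(\log n)$, while $\phi$ is a priori bounded (by $\binom{n_i}{|V(H)|}$, which becomes a universal constant as soon as $n_i<n_0$). Hence the chain must terminate; tracing the strict inequality in the hypothesis back one step from termination gives the desired contradiction, forcing $\phi\equiv 0$ for all sufficiently large $n$.
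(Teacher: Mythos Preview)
The paper omits the proof entirely, offering only a one-sentence heuristic (``a constant surplus that starts decreasing once $n\ge n_0$ and eventually vanishes''). Your outline is the standard progressive-induction argument and matches that heuristic in spirit, so the overall approach is the intended one.

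There is, however, a genuine gap. You correctly isolate the crux as the lower bound $\cN(H,\,G_n^{*}-V(H'))\ge \cN(H,G_m)$, and you are right that this is not a consequence of the stated hypotheses alone: it is an implicit structural assumption about the family $\cG_n$ (which \emph{does} hold in the paper's application, where $\cG_n$ consists of $F$-free graphs containing $T_k(n)$ and $H'$ is a balanced $K_{c,\dots,c}$, so that $T_k(n)-V(H')$ contains $T_k(m)$ and hence lies in $\cG_m$). The problem is that your fallback does not bypass this inequality. To obtain a chain with $\phi(n_{i+1})\ge \phi(n_i)+1$ from the one-step decomposition
\[
\phi(n_i)\;\le\;\bigl[\cN(H,G-V(H'))-\cN(H,G_{n_i}-V(H'))\bigr]-1,
\]
you still have to convert the bracket into $\phi(n_{i+1})$, and that step is precisely $\cN(H,G_{n_i}-V(H'))\ge \cN(H,G_{n_{i+1}})$. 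So the ``iteration'' is not an alternative route but the same route repeated; the obstacle is unchanged. The honest fix is to add this monotonicity (that deleting $V(H')$ from a member of $\cG_n$ leaves a graph with at least as many copies of $H$ as a member of $\cG_m$) as a hypothesis, or to verify it directly for the $\cG_n$ at hand.

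Two minor points: you reuse the symbol $n_0$ for both the hypothesis threshold and the first term of your descending chain; and the ``length $O(\log n)$'' claim is the wrong direction---from $n_{i+1}\ge n_i/2$ one gets a \emph{lower} bound $\Omega(\log n)$ on the number of steps before dropping below $n_0$, which is in fact what the argument needs.
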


We omit the proof of this specialized version.
It follows from the original version \cite{miki} in a straightforward way, but it is also easy to see why it holds without knowing the original proof. For small $n$ it is possible that some graph has more copies of $H$ than any $G_n\in \cG_n$. However, this means a surplus of constant many copies of $H$, and then this surplus starts decreasing when $n\ge n_0$, and eventually vanishes.

\smallskip

We will also use the following result. 
\begin{thm}[Gerbner and Palmer \cite{gp2}]\label{gepa}
Let $H$ be a graph and $F$ be a graph with chromatic number $k$, then $\ex(n,H,F)\le \ex(n,H,K_k)+o(n^{|V(H)|})$.
\end{thm}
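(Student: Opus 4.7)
The plan is to apply Szemer\'edi's regularity lemma to reduce any $F$-free host graph to a $K_k$-free graph at a negligible cost in copies of $H$. Let $G$ be an $F$-free graph on $n$ vertices; we aim to produce a $K_k$-free graph $G^*$ obtained from $G$ by removing $o(n^2)$ edges, and then observe that such a modification destroys only $o(n^{|V(H)|})$ copies of $H$, which yields
$\cN(H,G)\le \cN(H,G^*)+o(n^{|V(H)|})\le \ex(n,H,K_k)+o(n^{|V(H)|})$.

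First, fix small constants $d>0$ and $\epsilon>0$ (the latter depending on $F$ and $d$), and apply the regularity lemma to $G$ to obtain an $\epsilon$-regular equipartition $V_1,\ldots,V_t$. Let $R$ be the reduced graph on $\{V_1,\ldots,V_t\}$ whose edges are the $\epsilon$-regular pairs of density at least $d$. The key claim is that $R$ must be $K_k$-free: if $R$ contained a $K_k$, then the standard counting (embedding) lemma would produce, inside $G$, a copy of the balanced complete $k$-partite graph with parts of size $|V(F)|$, and since $\chi(F)=k$ this blow-up already contains $F$, contradicting the $F$-freeness of $G$.

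Next, define $G^*$ by removing from $G$ all edges inside any part $V_i$, all edges that lie in an $\epsilon$-irregular pair, and all edges that lie in a regular pair of density less than $d$. A standard estimate shows that the number of deleted edges is at most $\bigl(\tfrac{1}{t}+\epsilon+d\bigr)\binom{n}{2}$, which is $o(n^2)$ once $t$ is taken large and $\epsilon,d$ small. If $G^*$ contained a $K_k$, its $k$ vertices would have to lie in $k$ distinct parts (no intra-part edge survives), and every pair between the corresponding parts would be $\epsilon$-regular of density at least $d$, producing a $K_k$ in $R$; hence $G^*$ is $K_k$-free. Finally, each deleted edge lies in at most $O(n^{|V(H)|-2})$ copies of $H$, so deleting $o(n^2)$ edges costs only $o(n^{|V(H)|})$ copies, which gives the claimed inequality.

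The main technical point we expect to handle with care is the ordering of the quantifiers hidden in the $o(\cdot)$ terms. One must first fix $d$, then fix the $\epsilon$ supplied by the counting lemma (small enough to guarantee an embedding of $F$ whenever $R$ contains a $K_k$), and only then let $n$ and $t$ grow. Once this order is respected, the combinatorial content reduces to the routine estimate that $o(n^2)$ deleted edges participate in $o(n^{|V(H)|})$ copies of $H$.
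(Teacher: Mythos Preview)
The paper does not actually prove Theorem~\ref{gepa}; it is quoted as a result of Gerbner and Palmer~\cite{gp2} and used as a black box. Your regularity-lemma argument is correct and is precisely the standard route to this statement (and, as far as one can tell, the one taken in~\cite{gp2}): clean the graph along an $\epsilon$-regular partition, observe that the reduced graph is $K_k$-free because a $K_k$ there would yield, via the counting lemma, a blow-up of $K_k$ large enough to contain $F$, and then note that the $o(n^2)$ discarded edges carry only $o(n^{|V(H)|})$ copies of $H$.

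One equivalent packaging worth knowing: since $\chi(F)=k$, an $F$-free graph contains $o(n^{k})$ copies of $K_k$ (otherwise supersaturation gives a large balanced blow-up of $K_k$, hence $F$), and then the $K_k$-removal lemma deletes $o(n^2)$ edges to kill all $K_k$'s. This is the same argument with the regularity machinery hidden inside the removal lemma; your explicit version is fine.
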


The harder part of Theorem \ref{critver} follows from the next theorem.

\begin{thm}\label{compl} Let $H$ be a complete $k$-partite graph $K_{b,\dots,b}$ and $F$ be a complete $(k+1)$-partite graph $K_{1,a,\dots,a}$ such that $b>2a-2$. Then $H$ is $F$-Tur\'an-good. Moreover, every extremal graph contains $T_k(n)$.
\end{thm}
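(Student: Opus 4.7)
The plan is to apply the progressive induction tool of Lemma~\ref{progin}, with $\cG_n$ defined as the family of all $F$-free graphs on $n$ vertices that contain $T_k(n)$ as a spanning subgraph. The two tasks are (i) to verify that every $G\in\cG_n$ has exactly $\cN(H,T_k(n))$ copies of $H$, and (ii) for each extremal graph $G$, to exhibit a small subgraph $H'$ realizing the hypothesis of Lemma~\ref{progin}.

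For (i), fix $G\in\cG_n$ with the $k$-partition $V_1,\dots,V_k$ inherited from its spanning $T_k(n)$, and let $E_i$ denote the extra edges of $G$ inside $V_i$. If some $u\in V_i$ had $a$ neighbors $w_1,\dots,w_a$ in $V_i$ via $E_i$, then $u$, $\{w_1,\dots,w_a\}$, and any $a$ vertices from each $V_j$ with $j\neq i$ would span a copy of $F=K_{1,a,\dots,a}$, contradicting $F$-freeness; so each $(V_i,E_i)$ has maximum degree at most $a-1$. Now fix an embedding of $H=K_{b,\dots,b}$ into $G$ with parts $P_1,\dots,P_k$, set $a_i^\ell=|P_i\cap V_\ell|$, $c_\ell=\sum_i a_i^\ell$, and let $t_\ell$ be the number of parts $P_i$ with $a_i^\ell\ge 1$. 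Every vertex of $P_i\cap V_\ell$ is forced to be adjacent in $E_\ell$ to all $c_\ell-a_i^\ell$ vertices of the other parts of $H$ lying in $V_\ell$, so $c_\ell-a_i^\ell\le a-1$ whenever $a_i^\ell\ge 1$. Summing over the $t_\ell$ nonzero contributions gives $c_\ell(t_\ell-1)\le t_\ell(a-1)$, hence $t_\ell\ge 2$ forces $c_\ell\le 2(a-1)=2a-2<b$ by hypothesis. Since $\sum_\ell c_\ell=kb$ and $c_\ell\le b$ in every case, we must have $c_\ell=b$ and $t_\ell=1$ for every $\ell$: each $P_i$ lies in a single $V_{\sigma(i)}$ where $\sigma$ is a permutation. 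Therefore $\cN(H,G)=\prod_\ell \binom{|V_\ell|}{b}=\cN(H,T_k(n))$.

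For (ii), let $G$ be extremal. Because $H$ contains $b$ vertex-disjoint copies of $K_k$ linked through single-vertex swaps, Theorem~\ref{gpl} gives that $H$ is $K_{k+1}$-Tur\'an-good, and Theorem~\ref{gepa} then yields $\cN(H,G)=\cN(H,T_k(n))+o(n^{kb})$. Take $G_n=T_k(n)\in\cG_n$. If some vertex $v\in V(G)$ has degree $d(v)\le(1-1/k-\epsilon)n$ for a suitable fixed $\epsilon>0$, set $H'=\{v\}$; the upper bound $\cN_I(H,G,v)=O(d(v)^{(k-1)b}n^{b-1})$ then falls short of $\cN_I(H,T_k(n),v_0)=\Theta(n^{kb-1})$ (for any $v_0\in T_k(n)$) by a multiplicative constant, giving the required strict inequality.

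The main obstacle is the case when every vertex of $G$ has degree at least $(1-1/k-\epsilon)n$ but $G$ nevertheless fails to contain $T_k(n)$ spanning. My plan here is to apply a stability-style partition to $V(G)$ with $o(n^2)$ mis-assigned edges and then use the $a-1$ cap from part (i), together with the hypothesis $b>2a-2$, to show that any cross-part edge missing from the partition would either have to be compensated by a high-degree vertex inside some part---which is forbidden by $F$-freeness---or else eliminate $\Omega(n^{kb-1})$ copies of $H$ through an incident vertex. In either subcase, a single vertex $v$ witnessing the deficit produces the required strict inequality $\cN_I(H,G,v)<\cN_I(H,T_k(n),v_0)$ with $H'=\{v\}$. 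Once the hypothesis of Lemma~\ref{progin} is verified in all cases, we conclude that every extremal graph lies in $\cG_n$, and the theorem follows.
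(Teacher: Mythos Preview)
Your part (i) is correct and matches the paper's opening paragraph in spirit (the paper argues directly that more than $2a-2$ vertices of $H$ in one Tur\'an class must be independent, while you do the equivalent counting $c_\ell(t_\ell-1)\le t_\ell(a-1)$). Your small-degree case is also fine once ``suitable $\epsilon$'' is read as ``large enough $\epsilon$'': note that the crude bound $O(d(v)^{(k-1)b}n^{b-1})$ does \emph{not} beat $x$ near $d(v)=(1-1/k)n$, since the ratio there is of order $(k-1)^{(k-1)b}k^{b-1}\gg 1$; the paper in fact only uses this step to secure minimum degree $\Omega(n)$.

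The genuine gap is the ``main obstacle'' paragraph, which is a plan rather than a proof, and the plan has holes. First, Theorem~\ref{gepa} gives only that the $H$-count is within $o(n^{kb})$ of the Tur\'an value; turning that into a structural partition with $o(n^2)$ misplaced edges is itself a stability theorem you would have to prove. Second, even granting such a partition, a single missing cross-edge $uv$ kills only $O(n^{kb-2})$ copies of $H$ through $u$, not $\Omega(n^{kb-1})$; to find a vertex deficient by $\Omega(n^{kb-1})$ you would need it to miss $\Omega(n)$ cross-neighbours, and nothing in ``$G$ does not contain a spanning $T_k(n)$'' forces such a vertex to exist. Third, the $a-1$ cap you invoke was established only for graphs already in $\cG_n$, not for an arbitrary $F$-free graph equipped with an approximate partition. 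The paper handles this case by a different and essential idea: it takes $H'=K_{c,\dots,c}$ with $c=2a-1$ rather than a single vertex. The $F$-freeness then forces every outside vertex to miss at least one full part $A_i$ of $H'$, and comparing extensions of $H_0=K_{b-1,b,\dots,b}$ to $H$ in $G$ versus in $T_k(n)$ yields the ``typing'' of $V(G)$ into sets $B_1,\dots,B_k$; the hypothesis $b>2a-2$ is used a second time to show these $B_i$ behave as the parts of a near-Tur\'an graph. This internal construction of the $k$-partition is the step your single-vertex scheme cannot reproduce.
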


\begin{proof}
Let $\cG_n$ be the family of $F$-free graphs containing $T_k(n)$, and observe that they each contain the same number of copies of $H$. Indeed, fix a $G_n\in\cG_n$ and a copy of $T_k(n)$ in it, and let us call the edges not in that copy \textit{extra edges}. Then a vertex $v$ can be incident to at most $a-1$ extra edges. If a copy of $H$ contains an extra edge between $u$ and $v$, that means each other vertex of that $H$ is adjacent to at least one of $u$ and $v$, thus $H$ contains at most $2a-4$ other vertices from that partite set of $T_k(n)$ where $u$ and $v$ belong. Therefore, if a copy of $H$ in $G_n$ contains a set $U$ of more than $2a-2$ vertices from a partite set of $T_k(n)$, then there are no extra edges inside $U$, thus no edges at all inside $U$. Thus $U$ is a subset of a partite set of $H$, in particular $|U|\le b$. Thus the only way to choose $kb$ vertices from the $k$ partite sets of $T_k(n)$ is to choose $b$ from each, and they have to form the partite sets of $H$, thus $H$ does not use any extra edge.

\smallskip

Let $G$ be an $F$-free graph on $n$ vertices with $\cN(H,G)=\ex(n,H,F)$ and assume indirectly that progressive induction (Lemma \ref{progin}) cannot be applied to finish the proof, i.e. for any subgraph $H'$ of $G$ with $|V(H')|\le n/2$, we do not have that $\cN_I(H,G,H')\le \cN_I(H,G,T_k(n))$ with equality only if $G\in \cG_n$. Let $x$ denote the minimum number of copies of $H$ that a vertex in the Tur\'an graph $T_k(n)$ is contained in.

\begin{clm}\label{turi} Every vertex $v$ is contained in at least $x$ copies of $H$ in $G$.
\end{clm}

\begin{proof}
Otherwise let $H'$ be the graph containing only $v$, and we can apply Lemma \ref{progin} to finish the proof, a contradiction.
\end{proof}
Observe that if the degree of $v$ is $o(n)$, then we are in this situation: every copy of $H$ that contains $v$ also contains at least one vertex in its neighborhood, thus we can count the copies of $H$ containing $v$ by picking a neighbor of $v$ ($o(n)$ ways), then picking $|V(H)|-2$ other vertices ($O(n^{|V(H)|-2})$ ways).

\smallskip

Therefore, we can assume that every vertex has linear degree. We will try to apply Lemma \ref{progin} with $H'$ being the complete $k$-partite graph $K_{c,\dots,c}$ with $c=2a-1$. If $H'$ is not a subgraph of $G$, then $\cN(H,G)=0$. 
We pick a copy of $H'$ with partite sets $A_1,\dots, A_k$, and with a slight abuse of notation we denote that copy by $H'$ and its vertex set by $U'$, and we also denote a copy of $H'$ in $T_k(n)$ by $H'$. 

To apply Lemma \ref{progin} in this case, we need to show that $\cN_I(H,G,H')\le \cN_I(H,T_k(n),H')$, with equality only if $G\in\cG_n$. Assume that $\cN_I(H,G,H')\ge \cN_I(H,T_k(n),H')$. Observe that the number of copies of $H$ containing $t$ vertices from $U'$ is $\Theta(n^{|V(H)|-t})$ in $T_k(n)$, and $O(n^{|V(H)|-t})$ in $G$. Therefore, we will focus on the main term $n^{|V(H)|-1}$, thus on the copies of $H$ that contain exactly one vertex from $U'$. Let $H_0$ denote the complete $k$-partite graph $K_{b-1,b,b,\dots,b}$. 

Let $G'$ be the subgraph of $G$ induced on the vertices not in $U'$. Observe that if a vertex in $G'$ is connected to each $A_i$ by at least $a$ vertices, then they form a copy of $F$, a contradiction. Thus for each vertex $v$ of $G'$, there is at least one $A_i$ such that $v$ is connected to at most $a-1$ vertices of $A_i$. Note that $v$ can be connected to each vertex of some other $A_j$. We say that a copy of $H_0$ in $G'$ is \textit{nice} if all the vertices of the $k-1$ larger parts of this copy are connected to all the $c$ vertices in some $A_j$.

\begin{clm}\label{clam} All but
$o(n^{|V(H_0)|})$ copies of $H_0$ in $G'$ are nice.
\end{clm}

\begin{proof}
Let us consider a copy of $H$ that contains exactly one vertex from $U'$. It means it contains a copy of the complete $k$-partite graph $H_0:=K_{b-1,b,b,\dots,b}$ in $G'$. By Theorem \ref{gepa}, $G'$ contains at most $(1+o(1))\ex(n-kc,H_0,K_{k+1})$ copies of $H_0$. Theorem \ref{gpl} shows that $T_k(m)$ is $(k+1)$-Tur\'an-good for every $m$, thus we have that $\ex(n-kc,H_0,K_{k+1})=\cN(H_0,T_k(n-kc))$.

It is easy to see that in the Tur\'an graph, every copy of $H_0$ that avoids the selected copy of $H'$ can be extended to a copy of $H$ with one of $c$ vertices of that copy of $H'$ (those in the same partite class of the Tur\'an graph). We claim that in $G$, every copy of $H_0$ can be extended to a copy of $H$ with at most $c$ of the vertices of $H'$. Indeed, let $U''$ be the set of vertices in $U'$ that are connected to every vertex in the $(k-1)$ larger partite sets of $H_0$. If $U''$ intersects some partite sets in at least $a$ vertices, and has another vertex $v$ in another partite set, then $v$, those $a$ vertices, and $a$ vertices from each of the the $k-1$ larger partite sets of $H_0$ form a copy of $F$, a contradiction.
 If $|U''|> c$, then this is the case.
Moreover, if $|U''|=c$, then we have the same situation, unless $U''$ is a partite set of $H'$.  

The main term of $\cN_I(H,G,H')$ is at most $(1+o(1))\cN(H_0,T_k(n-kc))$ times $c$, minus the number of those 
copies of $H_0$ in $G'$ that are not connected to the $c$ vertices of one part of $H'$ (as they should be counted at most $c-1$ times). If the last term is $\Omega(n^{|V(H_0)|})$, then the main term is smaller than the main term of $\cN_I(H,T_k(n),H')$, a contradiction finishing the proof of the claim.
\end{proof}

Let us return to the proof of the theorem. Consider a copy of $H$ in $G'$. We say that a vertex $v$ of it is \textit{replaceable} (with respect to that copy) if deleting $v$ we obtain a nice copy of $H_0$, ie. all the vertices of the $k-1$ larger parts of that copy of $H_0$ are connected to the $c$ vertices of one part of $H'$. 
If that part is $A_i$, we say that $v$ is replaceable by $A_i$.

Consider an arbitrary vertex $v$ in $G'$. We know that $v$ is in $\Omega(n^{|V(H)|-1})$ copies of $H$ in $G$, and $\Omega(n^{|V(H)|-2})$ of those share a vertex with $H'$. This implies that there are $\Omega(n^{|V(H)|-1})$ copies of $H_0$ in $G'$ that can be extended to a copy of $H$ with $v$. $\Omega(n^{|V(H)|-1})$ of those copies of $H_0$ can also be extended to a copy of $H$ with any one vertex from $A_i$ for some $i\le k$ by Claim \ref{clam}, i.e. $v$ is replaceable with respect to $\Omega(n^{|V(H)|-1})$ copies of $H$. 
Let $B_i$ denote the set of vertices in $G'$ such that there are $\Omega(n^{|V(H)|-1})$ copies of $H_0$ in $G'$ that can be extended to a copy of $H$ with $v$ or with any vertex of $A_i$, i.e. $v$ is replaceable by $A_i$ with respect to $\Omega(n^{|V(H)|-1})$ copies of $H$. 

Observe that every $v\in B_i$ is connected to less than $a$ vertices of $A_i$. Indeed, let us consider a copy of $H_0$ that $v$ extends to a copy of $H$. If $v$ is connected to $a$ vertices of $A_i$, then we can take $a$ vertices from each of the $k-1$ larger partite classes of $H_0$, $a$ neighbors of $v$ from $A_i$ and $v$ to obtain a copy of $F$.

\begin{clm}

All but $o(n)$ vertices in $B_i$ are connected to all the vertices in every $A_j$ with $j\neq i$.

\end{clm}

\begin{proof}
Consider a copy of $H$ in $G'$ containing a vertex $v\in B_i$. If a vertex of it is replaceable by $A_j$ with respect to $H$, then $v$ is connected to all the vertices in $A_j$. Otherwise every copy of $H_0$ obtained by removing a vertex of $H$ not in $B_i$ must belong to the $o(n^{|V(H_0)|})$ exceptional copies in Claim \ref{clam}. Observe that only vertices in one part of $H$ can belong to $B_i$, as vertices in other parts are connected to $c\ge a$ vertices of $A_i$.

For each vertex $v$,  $\Theta(n^{|V(H)|-1})$ copies of $H$ contain $v$, thus we obtain $\Omega(n^{|V(H)|-2})$ copies of $H_0$ this way, as a copy of $H_0$ might be obtained $O(n)$ ways. If we have $\Omega(n)$ exceptional vertices, they would belong to $\Omega(n^{|V(H)|-1})=\Omega(n^{|V(H_0)|})$ not nice copies of $H_0$, a contradiction with Claim \ref{clam}.
\end{proof}

\begin{clm}
For every $i$, every vertex of $B_i$ is connected to less than $a$ vertices of $B_i$.
\end{clm}

\begin{proof}
Recall that we started with picking an arbitrary $H'$. We obtained that $n-o(n)$ vertices of $G$ must be connected to every vertex of $k-1$ partite classes of that $H'$, let $Q$ be their set. Consider an arbitrary $u\in Q$, that belongs to, say $B_1$. Then we obtain another copy of $H'$ from the original one if we delete a vertex of $A_1$ and add $u$ instead. Let us denote this copy by $H''$. Applying the same for $H''$, we obtain that $n-o(n)$ vertices of $G$ are connected to every vertex of $k-1$ partite classes of $H''$. In particular, if $v\in B_j$ is connected to every vertex of $k-1$ partite classes of $H''$, the missing partite class has to be $A_j$ (which is a partite class of $H''$). Therefore, $v$ is connected to the first partite class of the new copy of $H'$, in particular to $u$. Thus every $u \in Q\cap B_1$ is connected to all but $o(n)$ vertices in every $B_j$ for $j>1$. 

If a vertex in $B_j\cap Q$ is connected to $a$ vertices in $B_j\cap Q$, then these $a+1$ vertices with $a$ vertices from classes of $H'$ form a copy of $F$, a contradiction. This implies that for every $j$, $|B_j\cap Q|=n/k+o(n)$. Indeed, the copies of $H$ inside $Q$ are all formed by taking a partite class from every $B_j$ by the above observation, thus their number is at most $y:=\prod_{j=1}^k \binom{|B_j\cap Q|}{b}\le \cN(H,T_k(n))$. It is easy to see that if the sizes of the sets $B_j$ are less balanced, then $y$ decreases by $\Omega(n^{|V(H)|})$. On the other hand, the number of copies of $H$ containing a vertex outside $Q$ is $o(n^{|V(H)|})$, thus the total number of copies of $H$ in $G$ is less than $\cN(H,T_k(n))$, a contradiction.

This also shows that for every copy of $H_0$ inside $Q$, its partite sets are contained in distinct $B_j$'s. Indeed, we can essentially repeat the argument from the first paragraph of the proof of this theorem. Again, we call an edge $uv$ an extra edge if $u,v\in B_j\cap Q$. If a copy of $H_0$ contains the extra edge $uv$, that means each other vertex of that $H_0$ is adjacent to at least one of $u$ and $v$, thus $H_0$ contains at most $2a-4$ other vertices from $B_j\cap Q$. Therefore, if a copy of $H_0$ inside $Q$ contains a set $U$ of more than $2a-2$ vertices from $B_j$, then there are no extra edges inside $U$, thus no edges at all inside $U$. Thus $U$ is a subset of a partite set of $H$, in particular $|U|\le b$. Therefore, the only way to choose $kb-1$ vertices from the sets $B_j\cap Q$ is to choose $b-1$ vertices from one of them and $b$ vertices from the other sets, and they have to form the partite sets of $H$.

Consider a vertex $v\in B_i\setminus Q$. Assume first that for some $j\neq i$, $v$ is connected to at most $\alpha n$ vertices of $B_j$ for some $\alpha<1/k$. Consider the copies of $H$ containing $v$. There are $o(n^{|V(H)|-1})$ copies of $H$ containing $v$ that also contain another vertex outside $Q$. Consider those copies of $H$ that have all the vertices in $Q$ (except for $v$), i.e. a copy of $H_0$ inside $Q$ that forms a copy of $H$ with $v$.
Then one of the partite classes of that $H_0$ is inside $B_j$, thus the vertices are chosen from the $\alpha n$ neighbors of $v$ in $B_j$. This shows that $v$ is in less than $x$ copies of $H$ altogether, a contradiction with Claim \ref{turi}.

Assume now that a vertex $v\in B_i$ is connected to $a$ vertices of $B_i$. Then for $j\neq i$, $v$ has $n/k-o(n)$ neighbors in $B_j$. We will build a copy of $F$. The one-element partite class is $v$. Its $a$ neighbors in $B_i$ form another partite class. Then we go through the sets $B_j\cap Q$ ($j\neq i$) one by one. We always have that the $a+1$ vertices we picked from $B_i$ have $n/k-o(n)$ neighbors in $B_j\cap Q$, and the already picked other vertices are connected to all of those, except for $o(n)$. Therefore, we can always pick $a$ vertices from $B_j\cap Q$ that are connected to all the vertices picked earlier. This way we obtain a copy of $F$, a contradiction.
\end{proof}
The above claim implies that in a copy of $H$, vertices of $B_i$ cannot belong to two different partite classes. Therefore, every $B_i$ contains a partite class. Let $G''$ be the graph we obtain by deleting the edges inside $B_i$ for every $i$. Then $\cN(H,G)=\cN(H,G'')\le \cN(H,T_k(n))$, where the inequality follows from the facts that $G''$ is $K_{k+1}$-free and $H$ is $(k+1)$-Tur\'an-good.
\end{proof}

Now we can prove Theorem \ref{critver}, which we restate here for convenience.

\begin{theorem*}
    There exists an $F$-Tur\'an-good graph if and only if $F$ has a color-critical vertex.
\end{theorem*}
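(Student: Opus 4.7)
The plan is to handle the biconditional in two directions: the forward direction (color-critical vertex $\Rightarrow$ some $F$-Tur\'an-good $H$) reduces to Theorem \ref{compl}, while the backward direction (no color-critical vertex $\Rightarrow$ no such $H$) uses a star-augmentation construction. For the forward direction, if $v$ is a color-critical vertex of $F$ with $k = \chi(F)$, then $F$ has a proper $k$-colouring with $\{v\}$ alone in one class, so taking $a \ge |V(F)| - 1$ to bound the sizes of the other classes exhibits $F$ as a subgraph of $K_{1,a,\ldots,a}$. Choosing $b > 2a - 2$ and applying Theorem \ref{compl}, the graph $H = K_{b,\ldots,b}$ on $k - 1$ parts is $K_{1,a,\ldots,a}$-Tur\'an-good; since every $F$-free graph is $K_{1,a,\ldots,a}$-free (because $F \subseteq K_{1,a,\ldots,a}$), we get $\ex(n,H,F) \le \cN(H,T_{k-1}(n))$, while the reverse inequality holds because $T_{k-1}(n)$ is $F$-free, and $H$ does not contain $F$ since $\chi(H) = k - 1 < k$.

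For the backward direction, assume $F$ has no color-critical vertex and let $H$ be any candidate $F$-Tur\'an-good graph, so $H$ is $F$-free and has at least one edge. The case $\chi(H) \ge k$ is immediate: $\cN(H, T_{k-1}(n)) = 0$, whereas $H$ padded with isolated vertices gives an $F$-free graph with at least one copy of $H$. The main case is $\chi(H) \le k - 1$, and here I would consider $G = T_{k-1}(n) + K_{1,s}$ obtained by adding a star with $s = |V(H)|$ leaves inside one part $A_1$. Two claims are needed: $G$ is $F$-free, and $\cN(H,G) > \cN(H,T_{k-1}(n))$.

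For the $F$-freeness of $G$, any embedding of $F$ into $G$ would induce a partition $V_1 \cup \cdots \cup V_{k-1}$ of $V(F)$ according to which part $A_i$ each vertex maps to, with $V_i$ ($i \ge 2$) independent in $F$ and $F[V_1]$ isomorphic to a subgraph of $K_{1,s}$, hence a sub-star. An empty $F[V_1]$ would produce a proper $(k-1)$-colouring of $F$, contradicting $\chi(F) = k$; and a non-empty sub-star with centre $c$ would make $(V_1 \setminus \{c\}) \cup V_2 \cup \cdots \cup V_{k-1}$ a proper $(k-1)$-colouring of $F - c$ (the class $V_1 \setminus \{c\}$ being independent in $F$ since all edges of $F[V_1]$ were incident to $c$), rendering $c$ a color-critical vertex and contradicting the hypothesis. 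For the copy count, start from any proper $(k-1)$-colouring $W_1 \cup \cdots \cup W_{k-1}$ of $H$ and any edge $u'v' \in E(H)$ with $u' \in W_1$, $v' \in W_2$; shifting $v'$ into the first class makes $H[W_1 \cup \{v'\}]$ a star centred at $v'$ (with $u'$ as a leaf and other $W_1$ vertices isolated), which embeds into $K_{1,s}$ since $s \ge |V(H)|$. Each such embedding uses at least one star edge and therefore yields a copy of $H$ in $G$ not present in $T_{k-1}(n)$.

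The main obstacle is the $F$-freeness claim: showing that the absence of a color-critical vertex in $F$ is exactly what obstructs $F[V_1]$ from being a sub-star under any valid partition. The two degenerate cases of the sub-star --- the empty graph and a single edge --- correspond respectively to $\chi(F) \le k - 1$ and to the existence of a color-critical vertex, so the hypothesis is used in an essential way; the remainder of the backward direction then amounts to bookkeeping on partitions and embeddings.
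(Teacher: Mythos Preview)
Your proof is correct and follows essentially the same approach as the paper. For the forward direction both arguments reduce to Theorem~\ref{compl} via the inclusion $F\subseteq K_{1,a,\dots,a}$; for the backward direction the paper's construction $T'_{k-1}(n)$ (making one vertex of a largest part universal) is just your star-augmentation with $s=|A_1|-1$ instead of $s=|V(H)|$, and the $F$-freeness and extra-copy arguments are the same in substance.
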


\begin{proof}
Assume first that $F$ does not have a color-critical vertex and let $k=\chi(F)$. Let $T'_{k-1}(n)$ be obtained from $T_{k-1}(n)$ by taking a vertex $v$ from a largest partite set of $T_{k-1}(n)$, and connect it to every other vertex. Then deleting $v$ from $T'_{k-1}(n)$ we obtain a $(k-1)$-partite graph. As deleting any vertex from $F$ we obtain a $k$-chromatic graph, $T'_{k-1}(n)$ is $F$-free. 

We claim that for any graph $H$, for $n$ large enough, either $\cN(H,T'_{k-1}(n))>\cN(H,T_{k-1}(n))$ or $\cN(H,T'_{k-1}(n))=\cN(H,T_{k-1}(n))=0$. Indeed, if there is an $H$ in $T_{k-1}(n)$, then there is one avoiding $v$, but using a vertex $u$ from the same partite set, and a vertex $w$ from another partite set. Then we can replace $w$ with $v$ to obtain a copy of $H$ that is in $T'_{k-1}(n)$, but not in $T_{k-1}(n)$.

If $\cN(H,T_{k-1}(n))=0$, then the Tur\'an graph may be extremal if $H$ contains $F$, but then $H$ is not $F$-Tur\'an-good. If $\cN(H,T'_{k-1}(n))>\cN(H,T_{k-1}(n))$, then the Tur\'an graph is not extremal, finishing the proof.

Assume now that $F$ has a color-critical vertex. Then $F$ is a subgraph of a complete $k$-partite graph $K_{1,a,\dots,a}$, thus Theorem \ref{compl} finishes the proof.
\end{proof}

\section{Concluding remarks}

$\bullet$ We showed that if a $k$-chromatic graph $F$ has a color-critical edge and $\ex(n,K_k,F)=o(n^{k-1})$, then several $k$-Tur\'an-good graphs are also $F$-Tur\'an-good. The proof deals only with those copies of $K_k$ that are in the vertex set of a copy of $H$. This suggests to study a local variant of generalized Tur\'an problems. We say that a subgraph $G'$ of $G$ is $F$-free with respect to $G$ if there is no copy of $F$ induced on $V(G')$. How many subgraph of an $n$-vertex graph $G$ can be isomorphic to $H$ and be $F$-free with respect to $G$ at the same time?

If $F=K_k$ and the largest number as an answer to the above question is obtained when $G=T_{k-1}(n)$, then it is immediate that $H$ is $k$-Tur\'an-good. The argument used in the proof of Theorem \ref{turgoodcrit} shows that for any other $F$ with a color-critical edge and $\ex(n,K_k,F)=o(n^{k-1})$, we have that $H$ is also $F$-Tur\'an-good.

$\bullet$ A theme of this paper is to extend some results on $k$-Tur\'an good graphs to $F$-Tur\'an good graphs when $F$ is a $k$-chromatic graph with a color critical edge. This motivates the question: is every $k$-Tur\'an-good graph also $F$-Tur\'an-good?

$\bullet$ Maybe even more is true than what is suggested in the previous paragraph. Let us call a graph $H$ weakly $F$-Tur\'an-good if the number of copies of $H$ is maximized by complete multipartite graph among $F$-free graphs on $n$ vertices, provided $n$ is large enough. Is every weakly $K_k$-Tur\'an-good graph also weakly $F$-Tur\'an-good?

\end{document}